\documentclass{amsart}
\usepackage{amssymb,amsmath,amsrefs}
\usepackage[colorlinks=true]{hyperref}
\hypersetup{urlcolor=blue, citecolor=green}
\usepackage[dvips]{graphicx}
\usepackage{color}

\bibliographystyle{amsplain}

\theoremstyle{plain}

\newtheorem{mainthm}{Theorem}

\newtheorem{mainclly}[mainthm]{Corollary}
\newtheorem*{conj*}{Conjecture}
\newtheorem*{cor*}{Corollary}
\newtheorem{theorem}{Theorem}[section]

\newtheorem{prop}[theorem]{Proposition}
\newtheorem{proposition}[theorem]{Proposition}
\newtheorem{corollary}[theorem]{Corollary}
\newtheorem{lemma}[theorem]{Lemma}
\newtheorem{lema}[theorem]{Lemma}

\newtheorem{question}{Question}
\newtheorem*{q*}{Question}

\theoremstyle{definition}
\newtheorem{definition}[theorem]{Definition}
\newtheorem*{def*}{Definition}
\newtheorem{remark}[theorem]{Remark}
\newtheorem{rmk}[theorem]{Remark}

\renewcommand{\epsilon}{\varepsilon}

\newcommand{\Z}{\mathbb{Z}}
\newcommand{\N}{\mathbb{N}}

\newcommand{\eps}{\varepsilon}


\newcommand{\dist}{\operatorname{\textit{d}}}

\newcommand{\diam}{\operatorname{diam}}

\title[Beyond topological hyperbolicity]{Beyond topological hyperbolicity:\\ the L-shadowing property}
\author[A. Artigue, B. Carvalho, W. Cordeiro and J. Vieitez]{Alfonso Artigue, Bernardo Carvalho, Welington Cordeiro \\ and Jos\'e Vieitez}
\date{\today}

\thanks{2010 \emph{Mathematics Subject Classification}: Primary 37C50; Secondary 37B40, 37D20.}
\keywords{Topological, hyperbolicity, L-shadowing, expansiveness.}

\begin{document}
\begin{abstract}
In this paper we further explore the L-shadowing property defined in \cite{CC2}
for dynamical systems on compact spaces.
We prove that
structurally stable diffeomorphisms and some pseudo-Anosov diffeomorphisms of the two-dimensional sphere
satisfy this property. Homeomorphisms satisfying the L-shadowing property have a spectral decomposition where
the basic sets are either expansive or contain arbitrarily small topological semi-horseshoes (periodic sets where the restriction is semiconjugate to a shift).
To this end, we characterize the L-shadowing property using local stable and unstable sets and the classical shadowing property.
We exhibit homeomorphisms with the L-shadowing property and arbitrarily small topological semi-horseshoes without periodic points.
At the end, we show that positive finite-expansivity jointly with the shadowing property imply that the space is finite.
\end{abstract}

\maketitle

\section{Introduction and Statement of Results}

Hyperbolicity is one of the most important concepts in the theory of chaotic dynamical systems.
Since the seminal works of Anosov \cite{Anosov} and
 Smale \cite{Smale} it has been a main topic of research among
many mathematicians. In hyperbolic systems, each tangent space splits into two invariant subspaces, the first being uniformly contracted, and the second uniformly expanded, by the action of the derivative map. The dynamics of such systems can be well described in both topological and statistical viewpoints, so many effort is being made to understand the dynamics \emph{beyond uniform hyperbolicity}. The theory evolved into a lot of distinct directions and different generalizations of hyperbolicity can be found in the literature (see \cite{BDV}). If one is interested in considering dynamics from a topological point of view or to understand the dynamical behaviour of homeomorphisms, then it is natural to consider a topological concept for hyperbolicity. This has been done before in the literature by considering expansive homeomorphisms satisfying the shadowing property
defined on compact metric spaces (see \cite{AH}). These systems are usualy called \emph{topologically hyperbolic} since their dynamics and the uniformly hyperbolic dynamics are pretty much the same. For instance, they admit a local product structure, spectral decomposition, density of periodic points in the non-wandering set, limit shadowing, among others (see \cites{AH,P1}).

Examples of topologically hyperbolic diffeomorphisms of the torus $\mathbb{T}^2$ admitting non-hyperbolic fixed points can be found in \cite{Lewowicz}. These examples are on the boundary of the hyperbolic systems and are, indeed, topologically conjugate to Anosov diffeomorphisms of the torus. Hiraide proved in \cite{Hiraide} that all topologically hyperbolic homeomorphisms of the torus $\mathbb{T}^n$ must be topologically conjugate to Anosov diffeomorphisms of $\mathbb{T}^n$. This seems to indicate that topological hyperbolicity is not that far from uniform hyperbolicity when we look at the dynamics from a topological point of view. Thus, it is natural to consider generalizations of this topological notion of hyperbolicity. In this paper we explore the world \emph{beyond topological hyperbolicity} through the eyes of a dynamical property called L-shadowing, which is now defined.

\begin{def*}
We say that a homeomorphism $f$, defined in a compact metric space $(X,d)$, has the \emph{L-shadowing property}, if for every $\eps>0$, there exists $\delta>0$ such that for every sequence $(x_k)_{k\in\Z}\subset X$ satisfying $$d(f(x_k),x_{k+1})\leq\delta\,\,\,\,\,\, \text{for every} \,\,\,\,\,\, k\in\Z \,\,\,\,\,\, \text{and}$$  $$d(f(x_k),x_{k+1})\to0 \,\,\,\,\,\, \text{when} \,\,\,\,\,\, |k|\to\infty,$$ there is $z\in X$ satisfying $$d(f^k(z),x_k)\leq\eps \,\,\,\,\,\, \text{for every} \,\,\,\,\,\, k\in\Z \,\,\,\,\,\, \text{and}$$ $$d(f^k(z),x_k)\to0\,\,\,\,\,\, \text{when} \,\,\,\,\,\, |k|\to\infty.$$ In this case, we say that $(x_k)_{k\in\Z}$ is a $(\delta,L)$-\emph{pseudo orbit} of $f$ and that $(x_k)_{k\in\Z}$ is $(\eps,L)$-\emph{shadowed} by $z$.
\end{def*}

The L-shadowing property has similarities with the classical notions of shadowing and limit-shadowing, but we enlighten here an important difference which plays a significant role in our paper: the existence of one point that satisfies both the $\eps$-shadowing and limit-shadowing conditions. This creates a strong relation between L-shadowing and expansiveness that we clarify with the results of this paper.
From \cite{CC2}*{Proposition 3} we know that topologically hyperbolic homeomorphisms admit the L-shadowing property.
In our first theorem we expand the class of dynamical systems admitting the L-shadowing property.

\begin{mainthm}\label{Examples}
Structurally stable diffeomorphisms and also some pseudo-Anosov diffeomorphisms of the two-dimensional sphere satisfy the $L$-shadowing property.
\end{mainthm}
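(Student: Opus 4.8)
The plan is to reduce the statement, in both cases, to a characterization of the L-shadowing property that I would isolate first as a lemma (this is the characterization announced in the abstract): \emph{$f$ has the L-shadowing property if and only if $f$ has the classical shadowing property and, for every $\eps>0$, there is $\delta>0$ such that $W^s_\delta(x)\subseteq W^s_\eps(x)\cap W^s(x)$ and $W^u_\delta(x)\subseteq W^u_\eps(x)\cap W^u(x)$ for every $x$}, where $W^s_\delta(x)=\{y:d(f^n x,f^n y)\le\delta\text{ for all }n\ge0\}$, $W^s(x)=\{y:d(f^n x,f^n y)\to0\}$, and similarly for the unstable versions. Granting this, in each case it remains only to verify the shadowing property together with this uniform contraction of $\delta$-stable and $\delta$-unstable sets.

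For structurally stable diffeomorphisms $f$ I would first invoke the stability theorem (Robbin, Robinson and Ma\~n\'e) to get Axiom A together with the strong transversality condition; in particular the chain recurrent set $\Omega(f)$ is hyperbolic, has a spectral decomposition into basic sets, and admits an adapted filtration. The shadowing property on all of $M$ for this class is classical. For the contraction condition I would use a hyperbolic neighborhood $V\supseteq\Omega(f)$ on which the splitting extends to stable and unstable cone fields: every forward orbit eventually lies in $V$, so for $\delta$ small enough any $y\in W^s_\delta(x)$ has $f^n x,f^n y\in V$ for $n\ge N$, and the cone estimates force $f^N(y)$ into the local stable set of $f^N(x)$, giving exponential convergence of $d(f^n x,f^n y)$; the unstable side is symmetric. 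The characterization then yields the L-shadowing property.

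For pseudo-Anosov diffeomorphisms of $S^2$ I would work with the classical pillowcase examples: a hyperbolic $A\in SL(2,\Z)$ commutes with the hyperelliptic involution $\iota(v)=-v$ of $\T^2$, the orbit space $\T^2/\iota$ is $S^2$, and $A$ descends to a map $\bar A$ of $S^2$ with four $1$-prong singularities at the images of the $2$-torsion points, which one smooths to a diffeomorphism; let $\pi\colon\T^2\to S^2$ be the resulting branched double cover. Shadowing of $\bar A$ I would obtain from a Markov partition built from its invariant singular foliations, exactly as for Anosov maps, the rectangles surrounding the singularities simply having the appropriate number of corners; alternatively one transfers shadowing from $A$ along $\pi$. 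The uniform stable/unstable contraction is immediate at regular points, where the invariant foliations are genuine transverse hyperbolic foliations; near the singularities I would lift along $\pi$ to a neighborhood of the preimage in $\T^2$ on which $A$ is Anosov, apply the contraction there, and project back. Then $\bar A$ has the L-shadowing property, and the same reasoning covers the whole family of such examples.

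The crux in both cases is the behaviour away from uniform hyperbolicity. For a structurally stable $f$ the contraction condition is essentially a local-hyperbolic fact, so the real work is shadowing along the transient set $M\setminus\Omega(f)$, which is exactly what the strong transversality condition controls; this is why one obtains shadowing, and hence L-shadowing, on all of $M$ rather than only on $\Omega(f)$. For the pseudo-Anosov examples the hard part is the contraction condition itself at the $1$-prong singularities: there expansiveness fails — consistently with the spectral-decomposition dichotomy of the abstract, arbitrarily small topological semi-horseshoes appear near those points — and one must check, through the branched cover $\pi$, that a point trapped $\delta$-close to a forward orbit accumulating on a $1$-prong singularity is nonetheless forward asymptotic to it.
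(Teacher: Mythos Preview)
Your treatment of the structurally stable case is essentially the paper's: shadowing is classical, and the uniform inclusion $W^s_\delta(x)\subset W^s(x)$ follows from expansiveness of $\Omega(f)$ (the paper does this purely topologically in Proposition~\ref{propOmegaExp}, but your cone-field argument in a hyperbolic neighbourhood is equivalent). Combined with the characterization of L-shadowing, this gives Theorem~\ref{non-wandering}.

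The pseudo-Anosov case, however, has a genuine gap. The contraction condition you want to verify --- that $W^s_\delta(\bar x)\subseteq W^s(\bar x)$ uniformly --- simply \emph{fails} for $\bar A$ on $S^2$, and this is exactly the mechanism producing the arbitrarily small semi-horseshoes. The lifting argument you sketch breaks down as follows: if $\bar y\in W^s_\delta(\bar x)$ on $S^2$, then for each $n\ge 0$ the lift $A^n y$ is $\delta'$-close to $A^n x$ \emph{or} to $-A^n x=A^n(-x)$; the choice of sign can switch every time the orbit $A^n x$ passes near a $2$-torsion point. Concretely, pick any $\bar x$ whose $A$-orbit in $\T^2$ returns infinitely often to an annulus $\{\eta\le d(\,\cdot\,,0)\le\delta'/2\}$; build a pseudo-orbit on $\T^2$ by alternating between the orbits of $x$ and $-x$ at those return times; and shadow it by some $y$. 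Then $\bar y=\pi(y)\in W^s_{C\delta'}(\bar x)$ but $d_{S^2}(\bar A^n\bar x,\bar A^n\bar y)$ stays bounded below by a constant depending on $\eta$, so $\bar y\notin W^s(\bar x)$. Since $\eta$ can be taken arbitrarily small, this obstructs your condition for every $\delta>0$. (Incidentally, this also shows that your proposed characterization is not an ``if and only if'': your condition is strictly stronger than L-shadowing. The paper's characterization, Theorem~\ref{teoCharLsh}, only asks that $V^s_\eps(x)\cap V^u_\eps(y)\neq\emptyset$ for close $x,y$, where $V^s_\eps=W^s_\eps\cap W^s$.)

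The paper circumvents this entirely: rather than analysing local stable sets on $S^2$, it proves (Theorem~\ref{teoProjLsh}) that L-shadowing transfers along any continuous, open, onto semiconjugacy, and then observes that the branched cover $\pi\colon\T^2\to S^2$ is open. The Anosov upstairs has L-shadowing, so $\bar A$ does too. The point is that one lifts $(\delta,L)$-\emph{pseudo}-orbits rather than local stable sets; the openness of $\pi$ lets you do this while controlling both the $\delta$-errors and the limit-errors, and the switching ambiguity never arises because you only need \emph{some} lift of the pseudo-orbit, not a coherent lift of a whole $W^s_\delta$.
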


This theorem is a consequence of Theorems \ref{non-wandering} and \ref{teoProjLsh}, which are interesting on their own.
On structurally stable diffeomorphisms, the local stable and unstable sets are transverse manifolds given by the Stable Manifold Theorem, while in the
pseudo-Anosov diffeomorphisms of the sphere, they are plaques of singular foliations with a finite number of singularities.
This example on the sphere was first considered by Walters in \cite{Walters} and further explored in \cites{PaVi,AAV,ArtigueDend}. Near each singularity, there are periodic hyperbolic horseshoes in arbitrarily small dynamical balls.
This is in contrast with the case of pseudo-Anosov homeomorphisms on surfaces of genus greater than 1, that do not satisfy the shadowing property (see \cite{LewowiczSurfaces}).

It is known that topologically hyperbolic homeomorphisms satisfy the Smale-Bowen spectral decomposition theorem, see for example \cite{AH}*{Theorem 3.4.4}.
From \cite{CC2}*{Proposition 2} we know that the L-shadowing property implies the classical shadowing property and that the non-wandering set decomposes as a finite union of chain recurrent classes.
Thus, the L-shadowing property allows us to recover some properties of topological hyperbolicity without assuming expansivity.
Notice that the pseudo-Anosov of the two-sphere of Theorem \ref{Examples} is transitive,
and consequently, a chain recurrent class (in this case the whole sphere) may not be expansive.
In our second main result we study the non-expansive chain recurrent classes of a homeomorphism satisfying the L-shadowing property,
proving that a similar phenomenon happening in the pseudo-Anosov diffeomorphism of the sphere is present in any non-expansive
chain recurrent class.

\begin{def*}
We say that a homeomorphism $f$ of a compact metric space $X$ admits \emph{arbitrarily small topological semi-horseshoes}
if for all $\epsilon>0$ there exist $N\geq 1$ and a compact, $f^N$-invariant set $K\subset X$
such that $\sup_{k\in\Z}\diam(f^k(K))\leq\epsilon$ and
$f^N\colon K\to K$ is semi-conjugate to a shift map.
\end{def*}

We use the terminology \emph{semi-horseshoe} since $K$ is semi-conjugate to a shift map and due to its relation to the phenomenon on the pseudo-Anosov diffeomorphism of the sphere, where indeed horseshoes appear.

\begin{mainthm}
\label{teoEspectralLsh}
If a homeomorphism, defined in a compact metric space, has the L-shadowing property, then it has the shadowing property, admits only a finite number of chain recurrent classes and each of its classes is either expansive or contains arbitrarily small topological semi-horseshoes.
\end{mainthm}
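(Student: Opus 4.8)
The first two conclusions need no new argument: by \cite{CC2}*{Proposition 2} the $L$-shadowing property implies the shadowing property and that the non-wandering set is a finite union of chain recurrent classes, and since $f$ has the shadowing property one has $CR(f)=\Omega(f)$, each chain recurrent class $C$ is a compact $f$-invariant chain transitive set, and $C$ is an isolated invariant set (standard under shadowing), with an isolating neighbourhood $U$. So the whole content is the dichotomy, and I would fix a chain recurrent class $C$, assume that $f|_C$ is \emph{not} expansive, and show that $C$ contains topological semi-horseshoes of arbitrarily small orbit-diameter.

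The first step is to convert non-expansiveness into a workable local object, using the characterization of $L$-shadowing through local stable and unstable sets together with the shadowing property. Fix $\epsilon>0$, small enough that the $\epsilon$-neighbourhood of $C$ lies in $U$. Non-expansiveness of $f|_C$ yields, at every scale, distinct points $x,y\in C$ with $d(f^n x,f^n y)$ uniformly tiny; feeding such a pair into the characterization promotes ``permanent closeness'' to genuine \emph{bi-asymptoticity}, namely $d(f^n x,f^n y)\to 0$ as $n\to+\infty$ and as $n\to-\infty$ while $x\ne y$. This ``degenerate homoclinic point'', living inside a dynamical ball of diameter $\le\epsilon$, is the seed of the semi-horseshoe; that the pair can be chosen bi-asymptotic, and not merely close, is exactly what $L$-shadowing buys over plain shadowing, and it is what makes the coding below legitimate.

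With the seed in hand, the plan is a localized version of the horseshoe-from-shadowing construction. For each bi-infinite word $\omega$ over a finite alphabet one builds a pseudo orbit $\xi^\omega$ cut into blocks of a common length $N$, on which $\xi^\omega$ alternates — dictated by $\omega$ — between following the orbit of $x$ and performing excursions along the orbit of $y$, with every switch placed at an instant where $y$'s orbit is already extremely close to $x$'s, so that all jumps of $\xi^\omega$ are small; chain transitivity of $C$ is used to make the two block types have the same length $N$ inside $C$. Bi-asymptoticity forces the switching errors of $\xi^\omega$ to tend to $0$ at $\pm\infty$, so each $\xi^\omega$ is a $(\delta,L)$-pseudo orbit, and applying the $L$-shadowing property (here one really needs it, not just shadowing) produces points whose orbits $\epsilon$-shadow $\xi^\omega$ and converge to it at $\pm\infty$, hence stay in the $\epsilon$-dynamical ball of $x$. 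Letting $K$ be the closure of the union of these shadowing sets over all $\omega$, one checks that $K$ is compact, that $\sup_{k\in\Z}\diam(f^k K)\le\epsilon$ because all orbits of $K$ stay $\epsilon$-close to the same reference, that $K\subset C$ by isolation, that $f^N(K)=K$ since the shift on words corresponds to $f^N$, and that reading $\omega$ off a shadowing point defines a continuous, $f^N$-equivariant surjection of $K$ onto a non-trivial subshift — that is, a semi-conjugacy of $f^N|_K$ onto a shift map. Letting $\epsilon\to0$ completes the dichotomy.

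The main obstacle is the tension between keeping $K$ small and keeping it rich. Forcing the coded pseudo orbits to stay inside a dynamical ball of diameter $\le\epsilon$ requires the two branches $x,y$ to be extremely close and every $y$-excursion to occur where $y$ is already within the shadowing precision of $x$; but then a point following the $y$-branch is, after shadowing, indistinguishable from one following the $x$-branch, so the word $\omega$ cannot be recovered and the coding is not faithful — and the shadowing property alone can never separate orbits that stay closer than its precision. Reconciling these demands — engineering, for the same small $\epsilon$, a seed that is confined to a tiny dynamical ball yet remains visibly branched after shadowing, and upgrading the coded family from forward-invariant to genuinely $f^N$-invariant — is the crux, and it is precisely here that the finer information in the characterization of $L$-shadowing via local stable and unstable sets, beyond the bare shadowing property, must be used. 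The remaining points — isolation of $C$, the closing-up of blocks by chain transitivity, the continuity and equivariance of the coding map, and the limit $\epsilon\to0$ — are then routine.
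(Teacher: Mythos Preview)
Your two-step architecture --- (i) upgrade a non-trivial dynamical ball to a bi-asymptotic seed $z\in V^s_\epsilon(x)\cap V^u_\epsilon(x)\setminus\{x\}$, (ii) build a coded horseshoe from the seed --- matches the paper's (Lemma~\ref{lemLshByAsint} followed by Lemma~\ref{lemaWillyBarney}). But both steps, as you describe them, have gaps.

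In step (ii) your claim that ``bi-asymptoticity forces the switching errors of $\xi^\omega$ to tend to $0$ at $\pm\infty$'' is false: for a generic word $\omega$ the block boundaries occur periodically, and at each one the jump is the fixed error coming from your chain-transitivity return, so $\xi^\omega$ is a $\delta$-pseudo orbit but \emph{not} a $(\delta,L)$-pseudo orbit. Hence $L$-shadowing cannot be invoked here, and your sentence ``here one really needs it, not just shadowing'' is backwards. The paper uses only the ordinary shadowing property in this step (Lemma~\ref{lemaWillyBarney}): once you have the seed with $\epsilon_1:=d(x,z)>0$ and $\sup_k d(f^k x,f^k z)\le\epsilon$, you choose the shadowing precision to be $\epsilon_1/4$ --- a new, much finer scale --- and this simultaneously keeps $\diam f^k(K)\le 2\epsilon$ and separates the codes by $\ge\epsilon_1/2$. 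So the ``tension'' you identify as the crux is resolved by a second application of plain shadowing at a smaller scale, not by any extra information from the $L$-shadowing characterization.

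In step (i), ``feeding the pair into the characterization'' is too quick: Theorem~\ref{teoCharLsh} gives a point in $V^s_\epsilon(x)\cap V^u_\epsilon(y)$, which need not lie in $V^u_\epsilon(x)$ and could equal $x$. Producing a point $\neq x$ in $V^s_\epsilon(x)\cap V^u_\epsilon(x)$ from $y\in\Gamma_\delta(x)\setminus\{x\}$ requires a genuine argument --- the paper's Lemma~\ref{lemLshByAsint} does a case split on whether $\liminf_{n\to+\infty}d(f^n x,f^n z)=0$, and in the hard case passes to $\omega$-limit points and builds an auxiliary $(\gamma,L)$-pseudo orbit threading through them. This is where the $L$-shadowing property is actually indispensable.
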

The idea behind the proof is the following: if the dynamical ball of some point $x$ is non-trivial, then the L-shadowing property assures the existence of a point in the dynamical ball of $x$ that is different from $x$ and is asymptotic to $x$ (Lemma \ref{lemLshByAsint}) allowing us to create arbitrarily small topological semi-horseshoes (Lemma \ref{lemaWillyBarney}) when $x$ is a non-wandering point.
This theorem characterizes the non-expansive phenomena that can exist in the non-wandering set of a homeomorphism satisfying the L-shadowing property.

Several generalizations of expansivity were considered before: the $n$-expansive systems in \cites{Art,APV,CC,CC2,LZ,Morales}, finite expansiveness in \cite{CC}, countable and measure expansivity in \cites{Art,CDX,ArCa}, cw-expansive homeomorphisms in \cites{Kato93,Kato93B} and entropy expansiveness in \cite{Bowen,PaVi} (among others).
The L-shadowing property was defined in \cite{CC2} as an attempt to link shadowing and Morales' $n$-expansivity \cite{Morales}.
From \cite{CC}*{Theorem A} we have that for every $n\in\N$ there is an $n$-expansive homeomorphism, defined
in a compact metric space, that is not $(n-1)$-expansive, has the shadowing property
and admits infinitely many chain recurrent classes.
By \cite{CC2}*{Proposition 2} cited above, these examples
do not satisfy the $L$-shadowing property.
Thus, shadowing and $n$-expansivity do not imply $L$-shadowing.

Notice that the topological semi-horseshoes defined above are, in particular, uncountable and have positive entropy. Thus, we obtain the following direct corollary of Theorem \ref{teoEspectralLsh}. We denote by $\Omega(f)$ the set of all non-wandering points of $f$.

\begin{mainclly}\label{CountableEntropy}
 If a homeomorphism $f$, defined in a compact metric space, has the L-shadowing property, then the following statements are equivalent:
 \begin{enumerate}
  \item $\Omega(f)$ is expansive,
  \item $\Omega(f)$ is countably-expansive,
  \item $\Omega(f)$ is entropy-expansive.
 \end{enumerate}
\end{mainclly}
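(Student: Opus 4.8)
The plan is to establish the cycle by proving $(1)\Rightarrow(2)$, $(1)\Rightarrow(3)$, and then the two converses $(2)\Rightarrow(1)$ and $(3)\Rightarrow(1)$ together, so that the three conditions become equivalent by transitivity. The implications $(1)\Rightarrow(2)$ and $(1)\Rightarrow(3)$ are immediate: if $f|_{\Omega(f)}$ is expansive with constant $c$, then each dynamical ball $\Gamma_c(x)=\{\,y\in\Omega(f)\colon d(f^k(x),f^k(y))\le c\ \text{for all }k\in\Z\,\}$ equals $\{x\}$, and a one-point set is both countable and of zero topological entropy. So the real content lies in $(2)\Rightarrow(1)$ and $(3)\Rightarrow(1)$, for which the $L$-shadowing property enters through Theorem \ref{teoEspectralLsh}.

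Assume $f$ has the $L$-shadowing property, and let $c>0$ be the constant witnessing, respectively, countable-expansivity or entropy-expansivity of $f|_{\Omega(f)}$. By Theorem \ref{teoEspectralLsh}, $\Omega(f)$ is the union of finitely many chain recurrent classes $C_1,\dots,C_m$, each of which is either expansive or contains arbitrarily small topological semi-horseshoes. I claim no $C_i$ is of the second type. Indeed, if $C_i$ contains arbitrarily small topological semi-horseshoes, then taking $\epsilon=c$ in the definition yields $N\ge1$ and a compact $f^N$-invariant set $K\subseteq C_i\subseteq\Omega(f)$ with $\sup_{k\in\Z}\diam(f^k(K))\le c$ and $f^N\colon K\to K$ semi-conjugate to a shift. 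Fixing $x\in K$, for every $y\in K$ and $k\in\Z$ we get $d(f^k(x),f^k(y))\le\diam(f^k(K))\le c$, so $K\subseteq\Gamma_c(x)$. Recall (as noted just before the statement of this corollary) that a topological semi-horseshoe is uncountable and satisfies $h(f^N|_K)>0$; combining this with the standard inequality $h(f,K)\ge\frac1N h(f^N|_K)$ (an $(Nn,\epsilon)$-spanning set for $f$ is in particular an $(n,\epsilon)$-spanning set for $f^N$) and with monotonicity of Bowen's entropy for subsets, we conclude that $\Gamma_c(x)$ is uncountable and that $h\bigl(f,\Gamma_c(x)\bigr)\ge h(f,K)>0$. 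Either conclusion contradicts the choice of $c$, proving the claim; hence every $C_i$ is expansive.

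Finally I would upgrade ``each $C_i$ expansive'' to ``$\Omega(f)$ expansive''. Since $C_1,\dots,C_m$ are finitely many pairwise disjoint closed sets, $\eta:=\min_{i\neq j}\dist(C_i,C_j)>0$; if $c_i$ is an expansivity constant for $C_i$, set $c_0=\tfrac12\min\{\eta,c_1,\dots,c_m\}$. If $x,y\in\Omega(f)$ satisfy $d(f^k(x),f^k(y))\le c_0$ for all $k\in\Z$, then $d(x,y)<\eta$ forces $x$ and $y$ into the same class $C_i$, which being $f$-invariant contains both full orbits, and expansivity of $C_i$ gives $x=y$; thus $f|_{\Omega(f)}$ is expansive with constant $c_0$, closing $(1)\Leftrightarrow(2)\Leftrightarrow(3)$. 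There is no serious obstacle beyond Theorem \ref{teoEspectralLsh} itself: the points that need a little care are that the semi-horseshoe, living inside a chain recurrent class, genuinely lies in $\Omega(f)$ and is therefore seen by the expansivity constant of $f|_{\Omega(f)}$; the comparison of $h(f,K)$ with $h(f^N|_K)$ for the merely $f^N$-invariant set $K$; and the passage from class-wise expansivity to expansivity of the whole non-wandering set.
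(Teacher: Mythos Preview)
Your proof is correct and follows essentially the same route as the paper's: the paper's proof simply notes that expansivity implies both countable- and entropy-expansivity in general, and that the arbitrarily small topological semi-horseshoes furnished by Theorem~\ref{teoEspectralLsh} are uncountable and have positive entropy, which rules out the non-expansive alternative. You have spelled out in detail the steps the paper leaves implicit---that the semi-horseshoe $K$ sits inside $\Omega(f)$ and hence inside a dynamical ball $\Gamma_c(x)$, the comparison $h(f,K)\ge\frac1N h(f^N|_K)$, and the passage from class-wise to global expansivity via the positive distance between the finitely many classes---but the underlying argument is the same.
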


Topological hyperbolicity implies the density of periodic points in the non-wandering set. It is not clear, tough, in which situations the L-shadowing property assures the same result. The expansive chain-recurrent classes contain a dense set of periodic points, but those containing arbitrarily small topological semi-horseshoes could not contain periodic points. 

\begin{mainthm}\label{aperiodic}
There exists a topologically mixing homeomorphism, defined in a compact metric space, satisfying the L-shadowing property and without periodic points.
\end{mainthm}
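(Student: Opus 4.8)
The plan is to exhibit an explicit space and homeomorphism rather than to argue abstractly. By Theorem~\ref{teoEspectralLsh} any such example is automatically a single chain recurrent class that is non-expansive and contains arbitrarily small topological semi-horseshoes, so the construction must reconcile three competing demands: topological mixing, enough shadowing-type control for the $L$-shadowing property, and the complete absence of periodic orbits. A natural device to force aperiodicity while keeping strong local structure is to build $X$ as an inverse limit $X=\varprojlim(\Sigma_n,\pi_n)$ of topologically mixing shifts of finite type $(\Sigma_n,\sigma_n)$ along surjective factor maps $\pi_n\colon\Sigma_{n+1}\to\Sigma_n$ commuting with the shifts, and to let $f$ be the induced homeomorphism of $X$.

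First I would fix the tower: choose each $\Sigma_n$ so that every periodic orbit of $\sigma_n$ has minimal period greater than $n$ and so that the topological entropies are strictly increasing; the required surjections then exist by the lower-entropy factor theorem for mixing shifts of finite type. The soft properties are routine. The inverse limit $X$ is a compact metric space, $f$ is a homeomorphism, the coordinate projections $p_n\colon X\to\Sigma_n$ are surjective, and $f$ is topologically mixing because for nonempty basic open sets $p_n^{-1}(U)$ and $p_n^{-1}(V)$ one has $f^k(p_n^{-1}(U))\cap p_n^{-1}(V)\neq\emptyset$ precisely when $\sigma_n^{k}(U)\cap V\neq\emptyset$, which holds for all large $k$ by mixing of $\Sigma_n$. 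Aperiodicity is forced by the period growth: a point of $X$ fixed by $f^k$ would project, for every $n$, to a $\sigma_n$-periodic point whose period divides $k$, which is impossible once $n\geq k$ since the minimal period in $\Sigma_n$ exceeds $n$.

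The substance is the $L$-shadowing property, for which I would invoke the characterization stated in the introduction: it suffices to show that $f$ has the classical shadowing property together with the accompanying condition on local stable and unstable sets. Each $\Sigma_n$ is topologically hyperbolic, hence has the shadowing property, the $L$-shadowing property, and local product structure; the task is to lift these through the tower. Given a target accuracy one would, level by level and compatibly, select points $z_n\in\Sigma_n$ that $L$-shadow the projected pseudo orbits with errors summable over $n$, and then a compactness argument produces a coherent sequence $z=(z_n)\in X$. The reason there is room to do this is that each $\Sigma_n$ has positive entropy: when a two-sided limit pseudo orbit wobbles inside one of the odometer subsystems of $X$ (the inverse limits of compatible periodic orbits), its projection to each finite level is shadowed not inside the periodic orbit but by a nearby homoclinic point, and leaving the periodic orbit removes the obstruction to matching forward and backward asymptotics.

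The hard part is exactly this lifting step. Destroying periodic orbits is cheap, but doing so compatibly with shadowing is delicate: the most natural surgery, replacing each periodic orbit by a small odometer, would destroy $L$-shadowing, since an odometer has the shadowing property but not the $L$-shadowing property (the sequence $t_i-i$ attached to a two-sided limit pseudo orbit $(t_i)$ of an odometer may tend to different limits as $i\to+\infty$ and $i\to-\infty$, and then no single point asymptotically shadows it). So the factor maps $\pi_n$ must be engineered so that the longer approximate periodic orbits appearing at each level still carry genuine expansive-type local product structure at that scale and so that $\delta$-pseudo orbits lift through each $\pi_n$ with a uniformly controlled loss; arranging these choices so that they simultaneously raise the minimal period and the entropy is the technical core of the construction. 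Once the lifting is established, topological mixing, the $L$-shadowing property, and the absence of periodic points all hold, which is the assertion of the theorem.
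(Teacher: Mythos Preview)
Your proposal has a genuine gap: the $L$-shadowing property is never actually established. You correctly identify mixing and aperiodicity for the inverse limit, but the entire weight of the theorem rests on the step you yourself call ``the hard part,'' and that step is only gestured at. Saying that one would ``level by level and compatibly'' choose $z_n\in\Sigma_n$ that $L$-shadow the projected pseudo orbits is not a proof: if $z_{n+1}$ is an $L$-shadowing point at level $n+1$, there is no reason $\pi_n(z_{n+1})$ coincides with your chosen $z_n$, and in an SFT the $L$-shadowing point is far from unique, so compatibility does not come for free. The Boyle-type lower-entropy factor theorem gives you \emph{some} factor map but no control over its structure, and lifting shadowing (let alone $L$-shadowing) through an inverse limit along arbitrary factor maps is known to fail in general. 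Your closing paragraph concedes that engineering the $\pi_n$ to make this work ``is the technical core of the construction,'' and then the proposal ends without doing it.

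The paper avoids this obstacle entirely by using a direct \emph{product} $X=\prod_n X_n$ of mixing SFTs $X_n=X_{(p_n,p_{n+1})}$ with $p_n\to\infty$, rather than an inverse limit. In a product the coordinates are independent, so there is no compatibility constraint. The example already has the two-sided limit shadowing property (hence shadowing) by prior work. For $L$-shadowing the paper takes, for a given $(\delta,L)$-pseudo orbit, a point $s$ that $\epsilon/2$-shadows it and a point $r$ that two-sided limit shadows it, and builds the required $L$-shadowing point $w$ by splicing coordinates: set $w_n=s_n$ whenever $s_n$ and $r_n$ eventually agree, and $w_n=r_n$ otherwise. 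A short metric estimate shows the first index where splicing occurs is so large that the tail contributes at most $\epsilon$, giving $2\epsilon$-shadowing, while the coordinatewise eventual agreement with $r$ gives the limit shadowing. This splicing trick is precisely what your inverse-limit framework cannot imitate, because the factor maps couple the coordinates.
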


It is a consequence of Theorem B that this example admits arbitrarily small topological semi-horseshoes, tough in Remark \ref{rmkSemiFerrExplicitas} we exhibit them to clarify the definitions and results. In this work we obtain a characterization of the L-shadowing property in terms of local stable and unstable sets. For an expansive homeomorphism, the local stable (unstable) set of a point is contained in the stable (unstable) set of this point. However, for non-expansive homeomorphisms the local stable (unstable) set of a point can be much larger than the stable (unstable) set of this point. Indeed, several stable sets can intersect a same local stable set. Denote by $V^s_{\eps}(x)$ the intersection $W^s(x)\cap W^s_{\eps}(x)$ of the stable and the local stable sets of $x$ and by $V^u_{\eps}(x)$ the intersection $W^u(x)\cap W^u_{\eps}(x)$ of the unstable and the local unstable sets of $x$.

\begin{mainthm}
\label{teoCharLsh}
A homeomorphism, defined in a compact metric space, has the L-shadowing property if, and only if, it has the shadowing property and satisfies: for each $\epsilon>0$ there is $\delta>0$ such that $\dist(x,y)<\delta$
 implies $V^s_\epsilon(x)\cap V^u_\epsilon(y)\neq\emptyset$.
\end{mainthm}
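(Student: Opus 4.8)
The statement is an equivalence, so I would treat the two implications separately; the forward one is short and the converse carries all the weight.

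($\Rightarrow$) That the L-shadowing property implies the shadowing property is \cite{CC2}*{Proposition 2}, so only the bracket condition needs a proof. Given $\epsilon>0$, apply the L-shadowing property to $\epsilon/2$ to obtain $\delta_0>0$ and set $\delta=\min\{\delta_0,\epsilon/2\}$. If $\dist(x,y)<\delta$, the sequence $(w_k)_{k\in\Z}$ with $w_k=f^k(y)$ for $k<0$ and $w_k=f^k(x)$ for $k\geq 0$ is a $(\delta_0,L)$-pseudo orbit: every gap $\dist(f(w_k),w_{k+1})$ vanishes except the one at $k=-1$, of size $\dist(x,y)<\delta_0$. Hence it is $(\epsilon/2,L)$-shadowed by some $z$. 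Reading the shadowing inequalities for $k\geq 0$ gives $z\in W^s(x)\cap W^s_{\epsilon/2}(x)=V^s_{\epsilon/2}(x)\subseteq V^s_\epsilon(x)$; reading them for $k\leq 0$ gives $z\in W^u(y)$ together with $\dist(f^k(z),f^k(y))\leq\epsilon/2$ for $k<0$ and, at $k=0$, $\dist(z,y)\leq\dist(z,x)+\dist(x,y)\leq\epsilon/2+\delta\leq\epsilon$, so that $z\in V^u_\epsilon(y)$. Thus $z\in V^s_\epsilon(x)\cap V^u_\epsilon(y)$.

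($\Leftarrow$) Fix $\epsilon>0$. Choose $\eta_n=\epsilon\,2^{-n-3}$ and let $\rho_n>0$ witness the bracket condition for $\eta_n$. Fix a shadowing error $\alpha$, small enough both for the first bracket below and so that $\alpha<\epsilon/2$, with gap constant $\delta$; also fix auxiliary numbers $\gamma_n\searrow 0$ small enough to lie under the thresholds $\rho_n,\rho_{n+1}$ forced by the brackets. Let $(x_k)$ be a $(\delta,L)$-pseudo orbit. Since its gaps tend to $0$, for each $n$ there is $m_n$ (with $m_n\to\infty$) such that the forward tail $(x_k)_{k\geq m_n}$ and the backward tail $(x_k)_{k\leq -m_n}$, each completed to a bi-infinite pseudo orbit by the genuine orbit of its endpoint, have all gaps below the shadowing constant of $\gamma_n$; shadowing then produces $p_n^{+},p_n^{-}$ with $\dist(f^k(p_n^{+}),x_k)\leq\gamma_n$ for $k\geq m_n$ and $\dist(f^k(p_n^{-}),x_k)\leq\gamma_n$ for $k\leq -m_n$. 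Finally let $w$ $\alpha$-shadow $(x_k)$.

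The point $z$ is built as $z=\lim_n z^{(n)}$ with $z^{(0)}=w$, where $z^{(n)}$ is obtained from $z^{(n-1)}$ by two applications of the bracket condition. At a time $M_n^{+}$ chosen so far in the future that every discrepancy produced so far has already decayed below $\gamma_n$ — this is precisely where one uses that points of $V^s_\eta$ and $V^u_\eta$ are genuinely, not merely boundedly, asymptotic — one has $\dist(f^{M_n^{+}}(z^{(n-1)}),f^{M_n^{+}}(p_n^{+}))<\rho_n$, so bracketing these two points replaces the forward tail of $z^{(n-1)}$ by something that is $\eta_n$-close to, and forward-asymptotic to, the orbit of $p_n^{+}$, while remaining $\eta_n$-close to $z^{(n-1)}$ in the past; symmetrically, at a time $-M_n^{-}$ taken deep enough in the past one brackets against $p_n^{-}$ to fix the backward tail. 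Because the scales $\eta_n$ decay geometrically, the orbit of every $z^{(n)}$ stays within $\epsilon$ of $(x_k)$ and the $z^{(n)}$ form a Cauchy sequence, so $z$ exists and $\epsilon$-shadows $(x_k)$. Moreover, if $k$ lies in the band $[M_\ell^{+},M_{\ell+1}^{+})$ then $\dist(f^k(z),x_k)\leq\eta_\ell+\gamma_\ell+2\sum_{j>\ell}\eta_j$, which tends to $0$ as $\ell\to\infty$, with the mirror estimate as $k\to-\infty$; hence $(x_k)$ is $(\epsilon,L)$-shadowed.

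The whole difficulty sits in this last construction. The naive route — shadow $(x_k)$ once to secure the $\epsilon$-part, then splice in sharper and sharper shadowing orbits of the tails — does not work, because the bracket condition only connects points that are far closer than $\epsilon$, so the brackets fail to compose and one never improves the $\limsup$ past a fixed positive quantity. The step that must be carried out with care is the bookkeeping that turns this around: after each bracket the discrepancy between the glued pieces \emph{decays} along the orbit, so by performing the next bracket far enough out, where that discrepancy has become negligible compared with $\rho_n$, the closeness hypothesis of the bracket condition remains available; a geometric choice of the $\eta_n$ then bounds the accumulated perturbation below $\epsilon$ and forces the $z^{(n)}$ to converge to a single point doing both the $\epsilon$-shadowing and the asymptotic shadowing.
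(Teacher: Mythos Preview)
Your forward direction is exactly the paper's argument (Remark~\ref{rmkLshCharDirectTrivial}). Your converse is correct and rests on the same key mechanism as the paper's---iterated brackets at summable scales, using that points of $V^s_\eta$ and $V^u_\eta$ are genuinely asymptotic (not merely bounded) so that a later bracket can be applied once earlier discrepancies have decayed---but the organization differs. The paper factors the converse through an intermediate result: it first proves, by an inductive construction essentially identical to your forward half (Lemma~\ref{lemLimSh} and Theorem~\ref{teoLimSh}), that shadowing plus the bracket condition already yields the \emph{limit shadowing property}; with limit shadowing available, the L-shadowing proof becomes short---one takes $p_s,p_u$ limit-shadowing the forward and backward tails, a point $y$ that $\epsilon/3$-shadows the whole pseudo orbit, and links these by exactly \emph{two} brackets at the single fixed scale $\beta/3$. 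You instead merge both stages into one Cauchy-sequence construction, bracketing in both time directions at every step. The paper's route buys modularity (limit shadowing is isolated as a result of independent interest) and a cleaner endgame with only finitely many brackets; your direct route avoids the intermediate detour at the price of carrying the full two-sided bookkeeping throughout.
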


This last property is similar to the local product structure satisfied by topologically hyperbolic homeomorphisms, but it holds in more general scenarios where the local stable sets have much more complicated behavior.
Before proving the L-shadowing property, we show the limit shadowing property (Theorem \ref{teoLimSh}) that is an interesting result on its own. We also consider the relation between the L-shadowing property and the two-sided limit shadowing property discussed in \cites{Cthesis,C,C2,CC,CC2,CK}. It is proved that topologically mixing homeomorphisms admitting the L-shadowing property also satisfy the two-sided limit shadowing property (see Proposition \ref{mixing}) but the converse is not clear (see Question \ref{tslsandLs}). 
Notice that the example of Theorem \ref{aperiodic} is topologically mixing and satisfies both the L-shadowing and the two-sided limit shadowing properties. The chain recurrent classes of a homeomorphism satisfying the L-shadowing property are not necessarily topologically mixing, even in the expansive case, but they are transitive since they satisfy the shadowing property. In terms of the shadowing theory, this says that they do not necessarily satisfy the two-sided limit shadowing property. However, we can obtain a decomposition of each class in periodic elementary sets satisfying the two-sided limit shadowing property, as in the Bowen decomposition in the hyperbolic case.

\begin{mainthm}\label{Bowen}
If a homeomorphism $f$, defined in a compact metric space $X$, satisfies the L-shadowing property and $C$ is a chain recurrent class of $f$, then there exist $n\geq 1$, $C_1,\dots,C_n\subset C$, compact, disjoint, $f^n$-invariant sets such that $C=\bigcup_{i=1}^nC_i$, $f(C_i)=C_{(i+1)\mod n}$ and $f^n$ restricted to each $C_i$ is topologically mixing and satisfies the two-sided limit shadowing property.
\end{mainthm}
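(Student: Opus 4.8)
The plan is to combine the chain recurrent class structure coming from the shadowing property (Theorem~\ref{teoEspectralLsh}) with the classical cyclic--mixing decomposition of a transitive system with shadowing, and then to feed each mixing piece into Proposition~\ref{mixing}.

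First I would note that since $f$ has only finitely many chain recurrent classes, each class $C$ is isolated: if the classes lie at pairwise distance at least $3\rho$ and $N$ is the $\rho$-neighborhood of $C$, then any point whose full orbit lies in $N$ has its $\alpha$- and $\omega$-limit sets inside $\Omega(f)\cap N$, which equals $C$ because the finitely many classes are mutually separated; since $C$ is chain transitive, such a point is chain equivalent to the points of $C$ and hence belongs to $C$. Consequently the L-shadowing property restricts to $f|_C$: a $(\delta,L)$-pseudo orbit contained in $C$ is a $(\delta,L)$-pseudo orbit of $f$, its $(\eps,L)$-shadowing point $z$ has its whole orbit $\eps$-close to $C$, and for $\eps\le\rho$ this forces $z\in C$, while the limit conditions are inherited verbatim. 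In particular $f|_C$ has the shadowing property, so $f|_C$ is topologically transitive (a chain recurrent class is chain transitive, and for systems with shadowing chain transitivity implies transitivity) and its periodic points are dense in $C$ (the usual closing argument: a transitive point returns $\delta$-close to itself and shadowing the corresponding periodic $\delta$-pseudo orbit produces a nearby periodic point).

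Next I would invoke the cyclic--mixing decomposition for transitive systems with the shadowing property: there exist $n\geq 1$ and compact, pairwise disjoint sets $C_0,\dots,C_{n-1}\subset C$ with $\bigcup_i C_i=C$, $f(C_i)=C_{(i+1)\bmod n}$, and $f^n|_{C_i}$ topologically mixing. Here the disjointness is genuine --- each $C_i$ is clopen in $C$ --- precisely because the ambient system has shadowing (in particular a connected class is automatically mixing, which matches the situation of Theorem~\ref{Examples}). This is the one step that is not specific to L-shadowing: it is extracted from the standard spectral decomposition machinery (density of periodic points together with the analysis of the set of return times, as in \cite{AH}), and I would either cite it or reprove it as a lemma in the non-expansive setting. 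Since $f^n$ inherits both the shadowing and the L-shadowing properties from $f$ --- interpolating an $f^n$-pseudo orbit by genuine $f$-arcs turns a $(\delta,L)$-pseudo orbit of $f^n$ into one of $f$, and an $(\eps,L)$-shadowing point for $f$ is one for $f^n$ along the subsequence --- and since $C_i$ is a clopen $f^n$-invariant subset of $C$, the restriction $f^n|_{C_i}$ again has the L-shadowing property.

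Finally, each $C_i$ is compact, $f^n$-invariant and disjoint from the others, $f$ permutes them cyclically by construction, $f^n|_{C_i}$ is topologically mixing by construction, and $f^n|_{C_i}$ has the L-shadowing property; Proposition~\ref{mixing} then yields that $f^n|_{C_i}$ satisfies the two-sided limit shadowing property, which completes the proof. I expect the main obstacle to be the decomposition step: one must make sure that in the absence of expansivity the pieces are honestly disjoint closed sets and that mixing (not merely total transitivity) holds on the powers, so the argument has to be carried out using only the shadowing property and density of periodic points, without appealing to a local product structure or to the finiteness properties available in the expansive case.
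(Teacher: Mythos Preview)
Your overall route is the paper's: restrict L-shadowing to the class, produce a cyclic decomposition into mixing pieces, then apply Proposition~\ref{mixing} to each piece. Your explicit arguments that L-shadowing restricts to $C$ (via isolation of the finitely many classes) and passes to $f^n|_{C_i}$ are welcome, since the paper takes both for granted.

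There is, however, one genuine error: your claim that periodic points are dense in $C$ is false. Shadowing a periodic pseudo orbit gives a point whose orbit is $\eps$-close to a periodic sequence, but without expansivity this point need not itself be periodic; Theorem~\ref{aperiodic} in this very paper exhibits a topologically mixing homeomorphism with the L-shadowing property and \emph{no} periodic points whatsoever. So you cannot build the cyclic decomposition ``using only the shadowing property and density of periodic points,'' as you say in your final paragraph --- the second ingredient is simply unavailable in this generality, and the route through \cite{AH} implicitly uses expansivity at exactly this point.

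The paper sidesteps the issue by citing results that need only transitivity and shadowing, never periodic points: if $f|_C$ is not mixing, \cite{KKO}*{Theorem~3.8} shows that some power $f|_C^m$ fails to be transitive; Banks' regular periodic decomposition \cite{B}*{Corollary~2.1, Theorem~3.1} then supplies the cyclic $f^n$-invariant pieces with $f^n|_{C_i}$ totally transitive; disjointness of the pieces comes from \cite{O}*{Lemma~4} (this is where shadowing is used); and a second application of \cite{KKO}*{Theorem~3.8} upgrades total transitivity to mixing. Once you replace your proposed \cite{AH}-style argument with this chain of citations, the rest of your plan goes through unchanged.
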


In the last result of this paper, we generalize the main theorems of \cite{CC2} on positively expansive homeomorphisms. A classical result in topological dynamics is that if a positively expansive homeomorphism is defined in a compact metric space, then this space must be finite (see \cite{KR} for example). This theorem does not hold when we consider generalizations of positive expansivity: Morales in \cite{Morales} proved that the minimal subset of the classical Denjoy homeomorphism of $\mathbb{S}^1$ is positively 2-expansive and is defined in a Cantor subset, so it is not positively expansive. Examples of positively $n$-expansive homeomorphisms for each $n\in\N$ were introduced by Li and Zhang in \cite{LZ}, modifying a little bit the minimal subset of the Denjoy homeomorphism. It is proved \cite{CC2} that transitive positively $n$-expansive homeomorphisms satisfying the shadowing property can only be defined in finite metric spaces. The same is proved assuming the L-shadowing property (see \cite{CC2}*{Theorem A}). Finally, we generalize these results for positively finite-expansive homeomorphisms satisfying the shadowing property. A homeomorphism is said to be \emph{positively finite-expansive} if there exists $c>0$ such that the local stable set $W^s_c(x)$ of every $x\in X$ is finite.


\begin{mainthm}\label{h}
If a positively finite-expansive homeomorphism is defined in a compact metric space $X$ and has the shadowing property, then
$X$ is finite.
\end{mainthm}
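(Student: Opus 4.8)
The plan is to assume $X$ is infinite and derive a contradiction. Since shrinking $c$ only shrinks each local stable set, we may take $c$ as small as we like. Recall that shadowing gives $\mathrm{CR}(f)=\Omega(f)=\overline{\per(f)}$, that $\Omega(f)$ is the disjoint union of its (closed, $f$-invariant) chain classes, and that for each $\delta>0$ there are only finitely many $\delta$-chain-transitive components of $\mathrm{CR}_\delta(f)$. The elementary fact that drives everything is: \emph{a chain class $C$ with $\diam(C)\le c$ is finite} — for $p\in C$ and $y\in C$, invariance gives $d(f^k y,f^k p)\le\diam(C)\le c$ for all $k$, so $C\subseteq W^s_c(p)$, which is finite by hypothesis.

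The first step is to rule out chain classes of diameter $>c$. Suppose some chain class $C$ has $\diam(C)>c$; choose $u,v\in C$ with $d(u,v)>c/2$, let $\delta$ be the shadowing constant associated with the tracking precision $c/4$, and (using density of periodic points in $\Omega(f)$, together with the usual trick of wrapping once around a nearby periodic orbit) pick a periodic point $\hat p$ that is $\delta$-chain connected to the points of $C$ in both directions. Concatenating $\delta$-chains $\hat p\to u\to\hat p$ and $\hat p\to v\to\hat p$ produces two $\delta$-periodic-pseudo-orbits (``loops'') at $\hat p$ which, after suitable padding to a common length, differ by more than $c/2$ at some time. For each $\sigma\in\{0,1\}^{\Z}$ the bi-infinite concatenation of these loops dictated by $\sigma$ is a $\delta$-pseudo-orbit, hence is $(c/4)$-shadowed by a point $z_\sigma$; because the loops separate by more than $2\cdot(c/4)$, the assignment $\sigma\mapsto z_\sigma$ is injective, while fixing all \emph{positive} coordinates of $\sigma$ to be the loop that simply repeats the orbit of $\hat p$ keeps the forward orbit of $z_\sigma$ within $c/4$ of that orbit, so $z_\sigma\in W^s_c(\hat p)$. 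This gives uncountably many points in the finite set $W^s_c(\hat p)$, a contradiction. By the elementary fact, every chain class is therefore finite — and, being finite and $f$-invariant, is a union of periodic orbits.

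The second step shows there are only finitely many chain classes, so that $\Omega(f)$ is finite. If not, then for the fixed $\delta$ above each of the infinitely many chain classes sits inside one of the finitely many $\delta$-chain-transitive components $D_1,\dots,D_m$ of $\mathrm{CR}_\delta(f)$, so some $D_{j_0}$ contains infinitely many (distinct, finite) chain classes; since a finite chain class contains periodic points, $D_{j_0}$ contains a periodic point, and applying the horseshoe construction of the previous step to the $\delta$-chain-transitive set $D_{j_0}$ forces $\diam(D_{j_0})\le c$. Fixing $p$ in one such chain class inside $D_{j_0}$: every chain class $C\subseteq D_{j_0}$ is $f$-invariant and contained in $D_{j_0}$, so $d(f^k y,f^k p)\le\diam(D_{j_0})\le c$ for all $y\in C$ and all $k$, whence all of these infinitely many chain classes lie in the finite set $W^s_c(p)$ — contradiction. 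Hence $\Omega(f)$ is finite, and in particular it is a finite union of periodic orbits.

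Finally, with $\Omega(f)$ finite, every $\omega$-limit set $\omega(x)$ is finite and therefore a single periodic orbit $O$; the orbit of $x$ eventually synchronizes with $O$, so $f^K x\in W^s_c(q)$ for some $K\ge 0$ and some $q\in\Omega(f)$. The set $F:=\bigcup_{q\in\Omega(f)}W^s_c(q)$ is finite (positive finite-expansivity) and forward invariant, since $f\big(W^s_c(q)\big)\subseteq W^s_c(f(q))$; as $f$ is injective on the finite set $F$ we get $f(F)=F$, so $F$ consists of periodic points and $F\subseteq\Omega(f)$. Thus every $x\in X$ is eventually periodic, which gives $X=\bigcup_{p\in\per(f)}\{f^{-k}p:k\ge 0\}$, a finite union of finite sets — contradicting that $X$ is infinite. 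The main obstacle is the horseshoe construction in the second paragraph: extracting, from plain shadowing and a chain-transitive set of diameter exceeding $c$, uncountably many points in a single $W^s_c$. The delicate part is arranging the two loops so that their separation stays above the shadowing precision while the junctions between successive loops stay below $\delta$ — exactly the mechanism producing the small topological semi-horseshoes of Theorem~B, here needed starting only from the shadowing property; the rest of the argument is routine bookkeeping.
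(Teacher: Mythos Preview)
Your route is entirely different from the paper's, and the paper's is dramatically shorter. There the observation is that positive finite-expansivity gives, for each $x$, some $c_x>0$ with $W^s_{c_x}(x)=\{x\}$; the local product structure coming from shadowing then forces $B(x,\delta_x)\subset W^u_{c_x}(x)$ for a suitable $\delta_x$. Thus every point is Lyapunov stable for $f^{-1}$, so $f^{-1}$ is equicontinuous, and by a theorem of Akin--Glasner $f$ itself is equicontinuous. Equicontinuity then places a whole neighbourhood of each $x$ inside the finite set $W^s_c(x)$, so every point is isolated and $X$ is finite. No horseshoes, no spectral decomposition, no case analysis.

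Your horseshoe step has a gap that is more than the ``delicate alignment'' you flag at the end. You claim that $\diam(C)>c$ alone yields uncountably many points in some $W^s_c(\hat p)$, but consider the case where $C$ is a single periodic orbit of large diameter (nothing in your standing hypotheses excludes this). For the chosen $\delta$, any $\delta$-chain $\hat p\to u\to\hat p$ with $u\in C$ may simply be the orbit of $\hat p$ itself, so your two ``loops'' coincide and there is no separation at any aligned time; no coding, no uncountable family, no contradiction. More conceptually: the horseshoe mechanism behind Theorem~\ref{teoEspectralLsh} requires two orbit segments that begin and end close but separate by more than the shadowing precision \emph{at the same time index} --- the input of Lemma~\ref{lemaWillyBarney}. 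Two far-apart points $u,v$ in a chain class do not supply this; in the paper that input is manufactured from a nontrivial dynamical ball via the L-shadowing property (Lemma~\ref{lemLshByAsint}), and you have not produced a substitute using only shadowing. The same defect infects Step~2, where you re-invoke the construction on $D_{j_0}$. (There is also an inconsistency in your sketch: the ``loop that simply repeats the orbit of $\hat p$'' is neither of the two loops $\hat p\to u\to\hat p$, $\hat p\to v\to\hat p$ you built; what you actually need is to fix the positive blocks to \emph{one} of your two loops and observe that all $z_\sigma$ then lie in $W^s_{c/2}(z_{\sigma_0})$ --- but this only helps once the separation issue above is solved.)
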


The paper is organized as follows: in Section \ref{secCharLsh} we characterize the L-shadowing property proving Theorem \ref{teoCharLsh}, discuss the relation between the L-shadowing and the two-sided limit shadowing properties and prove Theorem \ref{Bowen}; in Section 3 we show how to construct the topological semi-horseshoes using the shadowing property and prove Theorem \ref{teoEspectralLsh}; in Section 4 we prove the examples of Theorems \ref{Examples} and \ref{aperiodic} and in Section 5 we prove Theorem \ref{h} on positive expansivity.

\vspace{+0.4cm}

\section{Characterization of L-shadowing}
\label{secCharLsh}
In this section we will prove Theorem \ref{teoCharLsh}, a characterization for the L-shadowing property in terms of the sets $V_{\eps}^s$ and $V_{\eps}^u$ and the shadowing property. We recall the definition of the classical shadowing property. Through this whole section, $(X,d)$ denotes a compact metric space and $f\colon X\to X$ a homeomorphism.

\begin{definition}
A sequence $(x_k)_{k\in\Z}\subset X$ is called a \emph{$\delta$-pseudo-orbit} if
it satisfies $$d(f(x_k),x_{k+1})<\delta \,\,\,\,\,\, \text{for every} \,\,\,\,\,\, k\in\Z.$$
The sequence $(x_k)_{k\in\Z}\subset X$ is $\eps$-\emph{shadowed} if there exists $y\in X$ satisfying $$d(f^k(y),x_k)<\eps \,\,\,\,\,\, \text{for every} \,\,\,\,\,\, k\in\Z.$$ We say that $f$ has the \emph{shadowing property} if for every $\eps>0$ there exists $\delta>0$ such that every $\delta$-pseudo-orbit is
$\eps$-shadowed.
\end{definition}

\begin{rmk}
\label{rmkLshCharDirectTrivial}By \cite{CC2}*{Proposition 2} we know that
the L-shadowing property implies the shadowing property.
Then, the direct part of Theorem \ref{teoCharLsh} follows observing that the past orbit of $y$ and the future orbit of $x$ form a $(\delta,L)$-pseudo orbit when $d(x,y)<\delta$ and the $(\eps,L)$-shadowing relation is equivalent, in this case, to $V^s_{\eps}(x)\cap V^u_{\eps}(y)\neq\emptyset$.
\end{rmk}

To prove the converse of Theorem \ref{teoCharLsh}, we first prove the limit shadowing property in Theorem \ref{teoLimSh}. This property was introduced by Eirola, Nevanlinna and Pilyugin in \cite{ENP}, see also \cite{P1}.

\begin{def*}
A sequence $(x_k)_{k\in\N}\subset X$ is called a \emph{limit pseudo-orbit} if it
satisfies $$d(f(x_k),x_{k+1})\rightarrow 0 \,\,\,\,\,\, \text{when} \,\,\,\,\,\, k\rightarrow\infty.$$
The sequence $\{x_k\}_{k\in\N}$ is \emph{limit-shadowed} if there exists $y\in X$ such that $$d(f^k(y),x_k)\rightarrow 0, \,\,\,\,\,\, \text{when} \,\,\,\,\,\,k\rightarrow
\infty.$$ We say that $f$ has the \emph{limit shadowing property} if every limit pseudo-orbit is limit-shadowed.
\end{def*}

The shadowing property assures the existence of points shadowing a limit pseudo orbit $(x_k)_{k\in\N}$ with any given desired accuracy, if we consider sufficiently large iterations. Indeed, if $(\delta_n)_{n\in\N}$ is any sequence of numbers converging to zero, then the shadowing property assures the existence of $(p_n)_{n\in\N}\subset X$ and an increasing sequence $(k_n)_{n\in\N}$ of natural number such that $(x_k)_{k\geq k_n}$ is $\delta_n$-shadowed by $f^{k_n}(p_n)$. It is natural to ask if any limit point $p^*$ of the sequence $(p_n)_{n\in\N}$ limit shadows $(x_k)_{k\in\N}$. 
For a fixed $k\in\N$ we analyze the number $d(f^k(p^*),x_k)$ that is the limit of the real sequence $d(f^k(p_n),x_k)$ when $n\to\infty$. Note that when $n$ is sufficiently large, we have $k<k_n$ and, hence, $d(f^k(p_n),x_k)$ is not necessarily bounded by $\delta_n$.

Using that $V^u_{\eps}(x)$ and $V^s_{\eps}(y)$ intersect when $x$ and $y$ are sufficiently close, we can solve the previous situation intersecting successively carefully chosen iterates of the points $p_n$. This choice is done in the next lemma, that is the induction step needed in the proof of Theorem \ref{teoLimSh}.

\begin{lemma}
\label{lemLimSh}
Suppose that $f$ has the shadowing property, $c>0$ is given and $\delta>0$ is such that
$\dist(a,b)<2\delta$ implies $V^s_{c}(a)\cap V^u_{c}(b)\neq\emptyset$,
$(x_k)_{k\in\N}$ is a limit pseudo orbit of $f$
and that $p\in X$ is such that $\dist(f^k(p),x_k)<\delta$ for all $k\geq 0$. Then, for all $\epsilon\in (0,\delta)$ there are $q\in X$ and $M<N\in\N$ arbitrarily large such that
\[
 \dist(f^k(q),x_k)\leq\left\{
\begin{array}{ll}
 c+\delta &\text{ if }\,\,\,0\leq k\leq M,\\
 c+\epsilon &\text{ if }\,\,\,M\leq k\leq N,\\
 \epsilon & \text{ if }\,\,\, k\geq N.
\end{array}
 \right.
\]
\end{lemma}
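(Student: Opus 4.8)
The plan is to produce the point $q$ by intersecting a stable set coming from $p$ with an unstable set coming from a second shadowing point $p'$ furnished by the shadowing property applied to a far tail of $(x_k)$. First, since $(x_k)_{k\in\N}$ is a limit pseudo-orbit, for any prescribed accuracy $\epsilon\in(0,\delta)$ the shadowing property gives $\delta'>0$ with every $\delta'$-pseudo-orbit $\epsilon$-shadowed; choosing $N$ large enough that $d(f(x_k),x_{k+1})<\delta'$ for all $k\geq N-1$, the sequence $(x_k)_{k\geq N}$ (reindexed) is a genuine $\delta'$-pseudo-orbit, hence there is $p'\in X$ with $d(f^{k}(p'),x_k)<\epsilon$ for all $k\geq N$. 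Now I have two points tracking the same limit pseudo-orbit on overlapping ranges: $p$ with accuracy $\delta$ on all of $k\geq0$, and $p'$ with accuracy $\epsilon$ on $k\geq N$. Pick any $M$ with $0<M<N$, both arbitrarily large.

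The core step is to glue these two orbit segments. Look at the pieces of orbit near time $M$: the forward orbit of $p$ from time $M$ on, and the backward orbit of $p'$ from time $M$ backward. More precisely, consider the point $a=f^M(p)$ and $b=f^M(p')$. Since $d(f^M(p),x_M)<\delta$ and $d(f^M(p'),x_M)<\epsilon<\delta$, the triangle inequality gives $d(a,b)<2\delta$, so by hypothesis $V^s_c(a)\cap V^u_c(b)\neq\emptyset$; pick a point $w$ in this intersection and set $q=f^{-M}(w)$. I must now verify the three estimates. For $k\geq M$: since $w\in W^s_c(a)=W^s_c(f^M(p))$ we have $d(f^{k-M}(w),f^{k-M}(a))=d(f^k(q),f^k(p))\leq c$, and combining with $d(f^k(p),x_k)<\delta$ for $M\leq k\leq N$ (resp.\ the better bound once we also feed in $p'$) yields $d(f^k(q),x_k)\leq c+\delta$ on this range — this already covers $0\leq k\leq M$ only if we instead use the \emph{unstable} side there, so one must be slightly careful about which segment controls which range.

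Here is the cleaner bookkeeping I would actually carry out. Use $w\in V^s_c(a)$ to control $q$ for $k\geq M$ via comparison with $p$, and $w\in V^u_c(b)$ to control $q$ for $k\leq M$ via comparison with $p'$ on the part of its domain where it is defined. Concretely: for $k\leq M$, $d(f^k(q),f^k(p'))=d(f^{k-M}(w),f^{k-M}(b))\leq c$ because $w\in W^u_c(b)$ and we are iterating backwards; but $p'$ only tracks the pseudo-orbit for $k\geq N>M$, so this comparison alone is useless on $0\leq k\leq M$. Instead I use the \emph{stable} comparison with $p$ on the whole range $k\geq0$: $d(f^k(q),f^k(p))\leq c$ requires $k\geq M$. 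So the right choice is to intersect $V^u_c$ coming from $p$ (going backward from time $M$, valid since $p$ tracks for all $k\geq0$, i.e.\ for $0\leq k\leq M$ after iterating back) with $V^s_c$ coming from $p'$ (going forward from time $M$... but $p'$ is only good for $k\geq N$). This shows the naive single-gluing does not respect the middle band $M\leq k\leq N$, which is precisely why the statement has three regimes: the middle band is where \emph{both} shadowers are simultaneously close to $(x_k)$, and there I get accuracy $c+\epsilon$ by comparing $q$ to $p'$ (accuracy $\epsilon$) through the stable set, while on $[0,M]$ I compare $q$ to $p$ (accuracy $\delta$) and on $[N,\infty)$ I compare $q$ to $p'$ directly through the stable set giving accuracy $c+\epsilon$, improved to $\epsilon$ once we note $w$ lies in the \emph{limit} stable set $V^s_c$ so $d(f^k(q),f^k(p'))\to0$ and hence eventually $\leq \epsilon$, giving the clean bound $\epsilon$ for $k\geq N$ after possibly enlarging $N$. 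The main obstacle is exactly this balancing act: choosing $M$, $N$, and which of the two shadowers governs each interval so that the "asymptotic" (limit) parts of $V^s_c$ and $V^u_c$ kick in on the unbounded tails $k\to-\infty$ (handled by $W^u_c(b)$ and the fact that $q=f^{-M}(w)$ with $p$ already a limit-tracker is not needed — here only $k\geq0$ matters) and $k\to+\infty$, delivering the sharp $\epsilon$-bound for $k\geq N$, while the transitional band inevitably costs an extra $c$ plus the larger slack $\delta$ near time $0$. Once the regions are assigned correctly, each of the three estimates is a one-line triangle inequality, and $M,N$ can be taken as large as desired since $p'$ and the choice of tail of $(x_k)$ impose no upper bound.
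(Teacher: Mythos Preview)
Your overall architecture is right --- produce a second shadower with accuracy $\epsilon$ on a tail, then glue it to $p$ using the hypothesis $V^s_c\cap V^u_c\neq\emptyset$ --- but the order in which you choose $M$, $N$ and the starting time of the second shadower is inconsistent, and this is a genuine gap, not just messy bookkeeping. You fix $N$ first (as the point from which $p'$ begins to $\epsilon$-track), then take $M<N$, and then assert $d(f^M(p'),x_M)<\epsilon$ in order to get $d(a,b)<2\delta$ at the gluing time $M$. That inequality is unjustified: $p'$ is only known to track for $k\geq N$, and $M<N$. You notice this yourself a few lines later, but the subsequent discussion never repairs it; in particular your claim that on the middle band $[M,N]$ ``both shadowers are simultaneously close to $(x_k)$'' is exactly what fails, since $p'$ has no control there.

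The paper's fix is to reverse the roles of $M$ and $N$. One lets the second shadower $y$ begin tracking at time $M$ (so $d(f^k(y),x_k)<\epsilon/2$ for all $k\geq M$), glues at time $M$ by taking $f^M(q)\in V^s_c(f^M(y))\cap V^u_c(f^M(p))$, and only \emph{afterwards} chooses $N\geq M$ using the asymptotic part of $V^s_c$: since $f^M(q)\in V^s_c(f^M(y))\subset W^s(f^M(y))$, there is some $N$ with $d(f^k(q),f^k(y))<\epsilon/2$ for $k\geq N$. With this ordering all three estimates are immediate: on $[0,M]$ compare $q$ to $p$ via $V^u_c$ (cost $c+\delta$); on $[M,N]$ compare $q$ to $y$ via $V^s_c$ (cost $c+\epsilon$, since $y$ already tracks here); on $[N,\infty)$ the stable distance has dropped below $\epsilon/2$, giving $\epsilon$. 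The point you were missing is that $N$ is not an input to the construction but an output, determined by the asymptotic behaviour of $q$ once $q$ has been built.
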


\begin{proof}
Since $f$ has the shadowing property,
for each $\epsilon\in (0,\delta)$
there are $y\in X$ and $M\in\N$ such that
\[ \dist(f^{k}(y),x_{k})<\frac{\epsilon}{2}\,\,\,\,\,\,\text{ for all }\,\,\,\,\,\,k\geq M.
\]
For $k\geq M$ we have
\begin{eqnarray*}
\dist(f^k(y),f^{k}(p))&\leq& \dist(f^k(y),x_{k}) + \dist(x_{k},f^{k}(p))\\
&<&\epsilon/2+\delta<2\delta.
\end{eqnarray*}
Then there exists $q\in X$ such that $$f^M(q)\in V^s_{c}(f^{M}(y))\cap V^u_{c}(f^{M}(p)).$$
If $0\leq k\leq M$ then $d(f^k(q),f^k(p))<c$ since $f^M(q)\in V^u_{c}(f^{M}(p))$ and, hence,
\begin{eqnarray*}
 \dist(f^k(q),x_k)&\leq& \dist(f^k(q),f^k(p)) + \dist(f^k(p),x_k)\\
&<&c+\delta.
\end{eqnarray*}
For $k\geq M$, we have $\dist(f^{k}(q),f^{k}(y))<c$ since  $f^M(q)\in V^s_{c}(f^{M}(p))$ and, hence,
\begin{eqnarray*}
\dist(f^{k}(q),x_{k})&\leq& \dist(f^{k}(q),f^{k}(y)) + \dist(f^{k}(y),x_{k})\\
&<&c+\epsilon/2<c+\epsilon.
\end{eqnarray*}
Now choose $N\geq M$ such that
$\dist(f^k(q),f^k(y))<\epsilon/2$ for all $k\geq N$.
Then
\begin{eqnarray*}
\dist(f^k(q),x_k)&\leq&\dist(f^k(q),f^k(y))+\dist(f^k(y),x_k)\\
 &<&\epsilon/2+\epsilon/2=
  \epsilon
 \end{eqnarray*}
 for all $k\geq N$.
\end{proof}

\begin{theorem}
 \label{teoLimSh}
  If $f$ has the shadowing property and
  for all $\epsilon>0$ there is $\delta>0$ such that $\dist(x,y)<\delta$
 implies $V^s_\epsilon(x)\cap V^u_\epsilon(y)\neq\emptyset$, then
 $f$ has the limit shadowing property.
\end{theorem}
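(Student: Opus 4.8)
The plan is to use Lemma~\ref{lemLimSh} as the induction step of an iterative construction: starting from the shadowing property, I build a sequence of points that shadow longer and longer initial windows of the given limit pseudo-orbit with total errors tending to $0$, and then obtain the limit-shadowing point as a limit of suitable translates of these points.

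First I fix all constants before the construction begins, so that there is no circular dependence among them. Pick a summable sequence $c_n>0$ (say $c_n=2^{-n}$); for each $n$ apply the hypothesis with $\epsilon=c_n$ to get $\delta_n>0$ such that $\dist(a,b)<2\delta_n$ implies $V^s_{c_n}(a)\cap V^u_{c_n}(b)\neq\emptyset$, shrinking the $\delta_n$ if necessary so that also $\delta_n\to 0$; then pick $\epsilon_n$ with $0<\epsilon_n<\tfrac12\min\{\delta_n,\delta_{n+1}\}$. Given a limit pseudo-orbit $(x_k)_{k\in\N}$, the base step uses that a far enough tail of it is an arbitrarily fine $\delta$-pseudo-orbit, so the shadowing property produces $P_0\in\N$ and $p_1\in X$ with $\dist(f^j(p_1),x_{P_0+j})<\delta_1$ for all $j\ge 0$ (extend the tail to a two-sided pseudo-orbit by a backward orbit, shadow it, and translate by $P_0$).

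The inductive step: given $P_{n-1}\in\N$ and $p_n\in X$ with $\dist(f^j(p_n),x_{P_{n-1}+j})<\delta_n$ for all $j\ge 0$, I apply Lemma~\ref{lemLimSh} to the limit pseudo-orbit $(x_{P_{n-1}+j})_{j\ge 0}$ with $c=c_n$, $\delta=\delta_n$, $\epsilon=\epsilon_n$, obtaining $q_n\in X$ and $0\le M_n<N_n$ such that $\dist(f^j(q_n),x_{P_{n-1}+j})\le c_n+\delta_n$ on $[0,M_n]$, $\le c_n+\epsilon_n$ on $[M_n,N_n]$, and $\le\epsilon_n$ for $j\ge N_n$, together with $\dist(f^j(q_n),f^j(p_n))\le c_n$ for $j\le M_n$ (from $f^{M_n}(q_n)\in V^u_{c_n}(f^{M_n}(p_n))$). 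I then set $P_n:=P_{n-1}+N_n$ and $p_{n+1}:=f^{N_n}(q_n)$; since the error of $q_n$ beyond time $N_n$ is $\le\epsilon_n$, one gets $\dist(f^j(p_{n+1}),x_{P_n+j})\le\epsilon_n<\delta_{n+1}$ for all $j\ge 0$, which closes the induction. The step I expect to be the main obstacle is precisely this chaining of constants: applying Lemma~\ref{lemLimSh} degrades the accuracy to $c_n+\delta_n$ on the block $[0,M_n]$, so one cannot re-feed the whole orbit of $q_n$ into the next application (that would force $\delta_{n+1}>c_n+\delta_n$, incompatible with $\delta_n\to 0$). The fix is to translate forward by $N_n$ — where the accuracy is already $\le\epsilon_n$, a quantity chosen freely small — thereby locking in the good behaviour on $[0,P_n]$ while continuing to improve the tail.

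For the limit, set $\widehat q_n:=f^{-P_{n-1}}(q_n)$, so that $\dist(f^k(\widehat q_n),x_k)\le c_n+\delta_n$ for $P_{n-1}\le k\le P_n$; translating the local-unstable estimate above gives $\dist(f^k(\widehat q_{n+1}),f^k(\widehat q_n))\le c_{n+1}$ for all $k\le P_n+M_{n+1}$, hence for all $k\le P_n$. Since $P_n\nearrow\infty$, for each fixed $k$ the sequence $(f^k(\widehat q_n))_n$ is Cauchy (telescoping), so $z:=\lim_n\widehat q_n$ exists in $X$ and, by continuity of $f^k$, $\dist(f^k(z),f^k(\widehat q_n))\le\sum_{l>n}c_l$ whenever $k\le P_n$. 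For $k\in[P_{n-1},P_n]$ this yields $\dist(f^k(z),x_k)\le c_n+\delta_n+\sum_{l>n}c_l$, and since $c_n,\delta_n\to 0$ and $\sum_l c_l<\infty$, the right-hand side tends to $0$ as $k\to\infty$. Hence $z$ limit-shadows $(x_k)$, which proves the limit shadowing property.
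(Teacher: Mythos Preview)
Your proof is correct and follows essentially the same approach as the paper: both iterate Lemma~\ref{lemLimSh} with summable $c_n$ and use the $V^u_{c_n}$-part of the construction to propagate a $c_n$-bound backward, so that the approximating points differ on each fixed window by a summable amount. The only cosmetic differences are that you translate forward at each step and un-translate at the end (while the paper keeps every $p_n$ anchored at index $0$), and you pass to the limit via a Cauchy-sequence argument rather than by extracting an accumulation point and invoking the uniform piecewise bounds.
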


\begin{proof}
Let $(c_n)_{n\in\N}$ be a sequence of positive numbers such that $\sum_{n=0}^\infty c_n<\infty$ and choose a decreasing sequence $\delta_n\to 0$ such that $$\dist(a,b)<2\delta_n \,\,\,\,\,\,\text{implies} \,\,\,\,\,\, V^s_{c_n}(a)\cap V^u_{c_n}(b)\neq\emptyset.$$
Suppose that $(x_k)_{k\in\N}$ is a limit pseudo orbit of $f$.
Assume that $\dist(f^k(p_0),x_k)<\delta_0$ for all $k\geq 0$.
We will prove that there exist $(p_n)_{n\in\N}\subset X$ and integers $M_0=N_0=0<M_1<N_1<M_2<N_2<\dots$ such that
for all $n>l\geq 0$ we have
\[
\dist(f^k(p_n),x_k)\leq
\left\{
\begin{array}{ll}
\delta_l+\sum_{i=l}^{n-1} c_i  & \text{ if }\,\,\, N_l\leq k\leq M_{l+1},\\
\delta_{l+1}+\sum_{i=l}^{n-1} c_i  & \text{ if }\,\,\, M_{l+1}\leq k\leq N_{l+1},\\
\delta_n  &  \text{ if }\,\,\, k\geq N_n.
\end{array}
\right.
\]
By Lemma \ref{lemLimSh} there are $p_1\in X$ and $N_1>M_1>0$ such that
\[
 \dist(f^k(p_1),x_k)<\left\{
\begin{array}{ll}
 c_0+\delta_0 &\text{ if }\,\,\,0\leq k\leq M_1,\\
 c_0+\delta_1 &\text{ if }\,\,\,M_1\leq k\leq N_1,\\
 \delta_1 & \text{ if } \,\,\,k\geq N_1.
\end{array}
 \right.
\]
Applying again Lemma \ref{lemLimSh} we obtain $p_2\in X$ and $N_2>M_2>N_1$ such that
\[
 \dist(f^k(p_2),x_k)<\left\{
\begin{array}{ll}
 c_1+c_0+\delta_0 &\text{ if }\,\,\,0\leq k\leq M_1,\\
 c_1+c_0+\delta_1 &\text{ if }\,\,\,M_1\leq k\leq N_1,\\
 c_1+\delta_1 &\text{ if }\,\,\,N_1\leq k\leq M_2,\\
 c_1+\delta_2 &\text{ if }\,\,\,M_2\leq k\leq N_2,\\
 \delta_2 & \text{ if } \,\,\,k\geq N_2.
\end{array}
 \right.
\]
In this way, by induction we define $p_n,M_n,N_n$ as claimed.
Let $p_*$ be a limit point of $(p_n)_{n\in\N}$.
For each $k\geq 0$ there is $l_k$ such that $N_{l_k}\leq k\leq N_{l_k+1}$
and then
\[
\dist(f^k(p_*),x_k)\leq \delta_{l_k}+\sum_{i=l_k}^{\infty} c_i.
\]
If $k\to\infty$ then $l_k\to\infty$, $\delta_{l_k}\to 0$  and $\sum_{i=l_k}^{\infty} c_i\to 0$.
Thus, $\dist(f^k(p_*),x_k)\to 0$.
\end{proof}


\begin{proof}[Proof of Theorem \ref{teoCharLsh}]
As explained in Remark \ref{rmkLshCharDirectTrivial}, we only have to prove the converse of the theorem.
For each $\beta>0$ let $\eps\in(0,\beta/3)$ be such that
$$d(x,y)<\eps\,\,\,\text{ implies }\,\,\,V^u_{\beta/3}(x)\cap V^s_{\beta/3}(y)\neq\emptyset.$$
Choose $\delta>0$, given by the shadowing property, such that every $\delta$-pseudo orbit is $\epsilon/3$-shadowed.
We will prove that each $(\delta,L)$-pseudo orbit $(x_k)_{k\in\Z}\subset X$ of $f$ is $(\beta,L)$-shadowed. By Theorem \ref{teoLimSh}, $f$ satisfies the limit shadowing property, so there exist $p_s,p_u\in X$ that limit shadow $(x_k)_{k\in\N}$ in the future and in the past, respectively.
Also, the shadowing property assures the existence of $y\in X$ that $\epsilon/3$-shadows $(x_k)_{k\in\Z}$.
If $n\in\N$ is big enough, then $d(f^n(y),f^n(p_s))<\eps$ and, hence, there exists $$z_1\in V^u_{\beta/3}(f^n(y))\cap V^s_{\beta/3}(f^n(p_s)).$$
Choose $m\in\N$ such that
$$d(f^{-m}(y),f^{-m}(p_u))<\frac{\epsilon}{2} \,\,\,\,\,\,\text{and} \,\,\,\,\,\,d(f^{-n-m}(z_1),f^{-m}(y))<\frac{\eps}{2}.$$
This implies that
$d(f^{-m}(p_u),f^{-n-m}(z_1))<\eps$
and then there is $$z_2\in V^u_{\beta/3}(f^{-m}(p_u))\cap V^s_{\beta/3}(f^{-n-m}(z_1)).$$
Thus, $z=f^m(z_2)$ satisfies:
\begin{enumerate}
	\item $z\in W^u(p_u)$ since $f^{-m}(z)=z_2\in V^u_{\beta/3}(f^{-m}(p_u))$,
	\item $z\in W^s(p_s)$ since $f^{-m}(z)=z_2\in V^s_{\beta/3}(f^{-n-m}(z_1))$ and
	$z_1\in V^s_{\beta/3}(f^n(p_s))$,
	\item $z\in \Gamma_{2\beta/3}(y)$ since $z_1\in V^u_{\beta/3}(f^n(y))$ and $z_2\in  V^s_{\beta/3}(f^{-n-m}(z_1))$.
\end{enumerate}
Items (1), (2) and (3) imply that $(x_k)_{k\in\Z}$ is $(\beta,L)$-shadowed by $z$ and the $L$-shadowing property is proved.
\end{proof}

The limit shadowing property considers sequences indexed by the natural numbers, but it has an analogue considering bilateral sequences, called \emph{two-sided limit shadowing property}.
Information about this property can be found in \cite{Cthesis,C,C2,CC,CC2,O,P1}.

\begin{definition}
A sequence $(x_k)_{k\in\Z}$ is a \emph{two-sided limit pseudo-orbit} if it satisfies $$d(f(x_k),x_{k+1})\rightarrow 0, \,\,\,\,\,\, |k|\rightarrow\infty.$$
The sequence $(x_k)_{k\in\Z}$ is \emph{two-sided limit shadowed} if there exists $y\in X$ satisfying $$d(f^k(y),x_k)\rightarrow 0, \,\,\,\,\,\,
|k|\rightarrow \infty.$$ We say that $f$ has the \emph{two-sided limit shadowing property} if every two-sided limit pseudo-orbit is two-sided limit shadowed.
\end{definition}

This property has one similarity with the L-shadowing property, that is, while the limit shadowing property for $f$ and its inverse assure the existence of one point limit-shadowing $(x_k)_{k\in\Z}$ in the future and another one in the past, in the two-sided limit shadowing property we obtain a single point with both behaviors. Then a similar result with Theorem \ref{teoCharLsh} is obtained.

\begin{proposition}
A homeomorphism $f$ of a compact metric space $X$ has the two-sided limit shadowing property if, and only if, $f$ and $f^{-1}$ have the limit shadowing property and $W^u(x)\cap W^s(y)\neq\emptyset$ for every $x,y\in X$.
\end{proposition}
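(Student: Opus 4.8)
The plan is to prove both implications by the standard device of \emph{concatenating} orbit segments with pseudo-orbit segments, using the hypothesis $W^u(x)\cap W^s(y)\neq\emptyset$ as the ``gluing'' mechanism that joins a past-shadowing point to a future-shadowing point, in exact parallel with the way Theorem \ref{teoCharLsh} was obtained from the local version $V^u_\epsilon(x)\cap V^s_\epsilon(y)\neq\emptyset$. Two elementary remarks are used throughout: first, a bilateral sequence whose consecutive errors $d(f(x_k),x_{k+1})$ vanish for all but finitely many $k$ is a two-sided limit pseudo-orbit; second, since $X$ is compact, $f$ and $f^{-1}$ are uniformly continuous, so reversing a sequence turns a limit pseudo-orbit of $f^{-1}$ into a limit pseudo-orbit of $f$ and vice versa.

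For the forward implication, assume $f$ has the two-sided limit shadowing property. Given a limit pseudo-orbit $(x_k)_{k\in\N}$ of $f$, extend it to $\Z$ by prepending the genuine backward orbit $x_k:=f^k(x_0)$ for $k<0$; the extended sequence is a two-sided limit pseudo-orbit, so any point that two-sided limit shadows it limit shadows $(x_k)_{k\in\N}$ in the future, proving the limit shadowing property of $f$. For $f^{-1}$ the argument is symmetric: from $d(f^{-1}(x_j),x_{j+1})\to 0$ and uniform continuity of $f$ one gets $d(x_j,f(x_{j+1}))\to 0$, so the reversed sequence, after prepending a backward $f$-orbit, is again a two-sided limit pseudo-orbit of $f$, and a two-sided limit shadowing point restricts in the past to the limit shadowing point required for $f^{-1}$. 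Finally, to obtain $W^u(x)\cap W^s(y)\neq\emptyset$, apply the two-sided limit shadowing property to the sequence equal to $f^k(x)$ for $k<0$ and to $f^k(y)$ for $k\ge 0$: all consecutive errors vanish except the single one at $k=-1$, so it is a two-sided limit pseudo-orbit, and a point $z$ two-sided limit shadowing it satisfies $d(f^k(z),f^k(y))\to 0$ as $k\to+\infty$ and $d(f^k(z),f^k(x))\to 0$ as $k\to-\infty$, i.e. $z\in W^u(x)\cap W^s(y)$.

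For the converse, let $(x_k)_{k\in\Z}$ be a two-sided limit pseudo-orbit of $f$. Its non-negative part is a limit pseudo-orbit of $f$, so the limit shadowing property of $f$ gives $p$ with $d(f^k(p),x_k)\to 0$ as $k\to+\infty$; by uniform continuity of $f^{-1}$, the negative part read backwards is a limit pseudo-orbit of $f^{-1}$, so the limit shadowing property of $f^{-1}$ gives $q$ with $d(f^k(q),x_k)\to 0$ as $k\to-\infty$. Choosing $z\in W^u(q)\cap W^s(p)$, which is nonempty by hypothesis, the triangle inequality gives $d(f^k(z),x_k)\le d(f^k(z),f^k(p))+d(f^k(p),x_k)\to 0$ as $k\to+\infty$ and $d(f^k(z),x_k)\le d(f^k(z),f^k(q))+d(f^k(q),x_k)\to 0$ as $k\to-\infty$, so $z$ two-sided limit shadows $(x_k)_{k\in\Z}$.

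I expect no serious obstacle: the conceptual core is the single gluing step $z\in W^u(q)\cap W^s(p)$ in the converse, which is precisely where the heteroclinic hypothesis is consumed. The only points needing care are being explicit that reversing a sequence interchanges the roles of $f$ and $f^{-1}$ (this is the only place compactness/uniform continuity enters) and that a sequence with finitely many nonzero consecutive errors still qualifies as a two-sided limit pseudo-orbit, which is what legitimizes the concatenations used in the forward direction.
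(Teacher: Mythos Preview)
Your proposal is correct and is precisely the natural argument the paper has in mind: the paper does not give a proof at all, simply stating ``The proof is clear, though one can see \cite{Cthesis}*{Lemma 1},'' and your concatenation-and-gluing argument is the standard way to unpack that clarity.
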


The proof is clear, though one can see \cite{Cthesis}*{Lemma 1}. The two-sided limit shadowing property implies both shadowing and topological mixing as is proved in \cite{CK} (recall that $f$ is topologically mixing if for any pair of non-empty open subsets $U,V\subset X$ there exists $n>0$ such that $f^k(U)\cap V\neq\emptyset$ for every $k\geq n$). The converse is not true, tough in \cite{C}*{Lemma 2.2} one can find a proof assuming expansiveness. In the following proposition, instead of expansiveness, we use the L-shadowing property.

\begin{proposition}\label{mixing}
If a topologically mixing homeomorphism is defined in a compact metric space and has the L-shadowing property, then it has the two-sided limit shadowing property.
\end{proposition}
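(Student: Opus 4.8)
The plan is to reduce to the characterization of the two-sided limit shadowing property recorded just above: one must show that $f$ and $f^{-1}$ have the limit shadowing property and that $W^u(x)\cap W^s(y)\neq\emptyset$ for every $x,y\in X$. The first requirement is essentially free. By Theorem \ref{teoCharLsh} the L-shadowing property is equivalent to the conjunction of the shadowing property and the $V^s_\epsilon$--$V^u_\epsilon$ intersection property, and each of these conditions is invariant under time reversal: the shadowing property passes to $f^{-1}$, the sets $V^s_\epsilon$ and $V^u_\epsilon$ for $f^{-1}$ are $V^u_\epsilon$ and $V^s_\epsilon$ for $f$, and the intersection hypothesis is symmetric since it is quantified over all pairs. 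Hence $f^{-1}$ also satisfies the hypotheses of Theorem \ref{teoLimSh}, so both $f$ and $f^{-1}$ have the limit shadowing property.

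The real content is the product-type condition $W^u(x)\cap W^s(y)\neq\emptyset$ for all $x,y$. First I would record a uniform form of topological mixing available on a compact space: for every $\gamma>0$ there is $N\in\N$ such that for all $n\geq N$ and all $a,b\in X$ there is a $\gamma$-chain (finite $\gamma$-pseudo-orbit) of length exactly $n$ joining $a$ to $b$. This is obtained by covering $X$ with finitely many balls of radius $\gamma/2$, invoking topological mixing for each of the finitely many pairs of these balls, taking $N$ to be the largest of the resulting thresholds, and using uniform continuity of $f$ so that the chain can be taken to run along a genuine orbit segment of the connecting point, all of its jumps vanishing except the two at the ends.

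Now fix $x,y\in X$. Take any $\epsilon>0$, let $\delta>0$ be given by the L-shadowing property for this $\epsilon$, let $N$ be as above for $\gamma=\delta$, fix some $n\geq N$, and --- this is the step that neutralizes the time-shift introduced by a bridge --- choose a $\delta$-chain $z_0=x,z_1,\dots,z_n=f^n(y)$ of length $n$. Define
\[
x_k=\begin{cases} f^k(x) & k\leq 0,\\ z_k & 0\leq k\leq n,\\ f^k(y) & k\geq n.\end{cases}
\]
This is well defined (the cases agree at $k=0$ and $k=n$), and $d(f(x_k),x_{k+1})\leq\delta$ for every $k$, with equality to $0$ whenever $k\leq -1$ or $k\geq n$, so $(x_k)_{k\in\Z}$ is a $(\delta,L)$-pseudo-orbit. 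The L-shadowing property then yields $z\in X$ with $d(f^k(z),x_k)\to 0$ as $|k|\to\infty$; since the past tail of $(x_k)$ is the orbit of $x$ and the future tail is the orbit of $y$, letting $k\to-\infty$ gives $z\in W^u(x)$ and letting $k\to+\infty$ gives $z\in W^s(y)$. Hence $W^u(x)\cap W^s(y)\neq\emptyset$, and combining this with the first paragraph and the characterization Proposition above, $f$ has the two-sided limit shadowing property.

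The point I expect to be the main obstacle is precisely the reindexing handled in the third paragraph. A naive bridge connecting the past orbit of $x$ to the future orbit of $y$ has some length $n$ that one cannot prescribe, and it shifts the future branch by $n$, so a direct construction only produces a point in $W^u(x)\cap W^s(f^{-n}(y))$ rather than in $W^u(x)\cap W^s(y)$. The resolution is to exploit the flexibility in the bridge length afforded by the uniform form of topological mixing and to aim the bridge at $f^n(y)$ instead of $y$, so that after shifting indices by $-n$ the future part of the pseudo-orbit is again exactly the forward orbit of $y$; establishing that uniform statement, and recognizing that it is what makes the shift harmless, is the crux of the argument.
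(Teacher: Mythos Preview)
Your argument is correct and follows the same skeleton as the paper's: reduce via the preceding characterization to showing $W^u(x)\cap W^s(y)\neq\emptyset$ for all $x,y$, then splice the past orbit of $x$ to the future orbit of $y$ through a short bridge and apply L-shadowing. The only substantive difference is the bridging device. The paper observes that shadowing plus topological mixing yields the specification property (citing \cite{DGS}), takes $m$ so that $m$-spaced specifications are $\delta$-traced, obtains $w\in B(f^{-m}(x),\delta)$ with $f^{2m}(w)\in B(f^m(y),\delta)$, and concatenates the past orbit of $f^{-m}(x)$, the orbit segment of $w$, and the future orbit of $f^m(y)$; the symmetric shift by $m$ on both tails makes the L-shadow (after applying $f^m$) land in $W^u(x)\cap W^s(y)$. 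You bypass specification entirely, proving a uniform-mixing statement directly from compactness and neutralizing the index shift by aiming the chain at $f^n(y)$. Your route is more self-contained and does not use the shadowing property in the bridge construction; the paper's is a line shorter by outsourcing to a known theorem. One cosmetic point: to guarantee the first jump $d(f(a),f(c))$ of your $\gamma$-chain is below $\gamma$, the covering radius should be chosen via uniform continuity of $f$ rather than as $\gamma/2$; since you already invoke uniform continuity, this is only a matter of phrasing.
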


\begin{proof}
To prove the two-sided limit shadowing property, it is enough to prove that $W^u(x)\cap W^s(y)\neq\emptyset$ for every $x,y\in X$ as in the previous proposition. Indeed, it is simple to note that the L-shadowing property implies the limit shadowing property for both $f$ and $f^{-1}$. Also note that $f$ has the specification property since it has the shadowing property and is topologically mixing (see \cite{DGS}). Let $\eps=\diam(X)$ and consider $\delta>0$, given by the L-shadowing property, such that every $(\delta,L)$-pseudo orbit is $(\eps,L)$-shadowed. Let $m\in\N$, given by the specification property, be such that every $m$-spaced specification is $\delta$-shadowed. The specification property assures the existence of $w\in B(f^{-m}(x),\delta)$ such that $f^{2m}(w)\in B(f^m(y),\delta)$. Thus, the sequence formed by the past orbit of $f^{-m}(x)$, the segment of orbit from $w$ to $f^{2m}(w)$ and the future orbit of $f^m(y)$ is a $(\delta,L)$-pseudo orbit of $f$. Then the L-shadowing property assures that it is $(\eps,L)$-shadowed by $z\in X$ and, hence, $z\in W^u(x)\cap W^s(y)$.
\end{proof}

It is not clear whether the two-sided limit shadowing property implies the L-shadowing property. It could happen that the point two-sided limit shadowing a given $(\delta,L)$-pseudo orbit is very distant from the pseudo orbit during some iterates, while the point $\eps$-shadowing it can not limit shadow it. So the following question is still unanswered.

\begin{question}\label{tslsandLs}
Does the two-sided limit shadowing property imply the L-shadowing property?
\end{question}


The \emph{chain-recurrent class} of $x\in X$ is the set of all $y\in X$ such that for every $\eps>0$ there exist a periodic $\eps$-pseudo orbit containing both $x$ and $y$. We say that $f$ is \emph{transitive} if for any pair of non-empty open subsets $U,V\subset X$, there exists $n\in\N$ such that $f^n(U)\cap V\neq\emptyset$. It is easy to see that transitive homeomorphisms admit only one chain recurrent class, that is the whole space. Now we prove Theorem \ref{Bowen} where a decomposition of each class in elementary sets satisfying the two-sided limit shadowing property is also obtained.

\begin{proof}[Proof of Theorem \ref{Bowen}]
We know that $f$ admits only a finite number of chain recurrent classes, so the restriction of $f$ to each of its classes is a transitive homeomorphism satisfying the L-shadowing property. 
If $f_{|C}$ is topologically mixing, then Proposition \ref{mixing} assures that it has the two-sided limit shadowing property and the whole class $C$ is an elementary set as in the theorem. If $f_{|C}$ is not topologically mixing, then \cite{KKO}*{Theorem 3.8} implies that some iterate $f_{|C}^m$ is not transitive. Then \cite{B}*{Corollary 2.1} assures the existence of $n$ dividing $m$ and sets $C_1,\dots, C_n\subset C$, compact and $f^n$-invariant such that $C=\bigcup_{i=1}^nC_i$ and $f(C_i)=C_{(i+1)\mod n}$. These sets are disjoint by \cite{O}*{Lemma 4} and $f^n_{|C_i}$ is totally transitive by \cite{B}*{Theorem 3.1}. Then $f^n_{|C_i}$ is topologically mixing (again by \cite{KKO}*{Theorem 3.8}) and Proposition \ref{mixing} assures that it satisfies the two-sided limit shadowing property.
\end{proof}

\section{Semi-horseshoes}
In this section, we prove Theorem \ref{teoEspectralLsh}, a Spectral Decomposition Theorem characterizing the non-expansive chain recurrent classes of homeomorphisms with the L-shadowing property, as those containing arbitrarily small topological semi-horseshoes.
As we said, the shadowing property and the finiteness of the chain recurrent classes were proved in \cite{CC2}.
Then, we turn our attention to the last part of the theorem and start with a technical lemma, where, under the assumption of the shadowing property, a sufficient condition to the existence of topological semi-horseshoes is obtained. In \cite{SuOp} a similar result was obtained in a slightly different context.

We recall that a point $x\in X$ is called a \emph{non-wandering point} if for each open subset $U$ of $X$ containing $x$, there is $k>0$ such that $f^k(U)\cap U\neq\emptyset$. The set of all non-wandering points of $f$ is called the \emph{non-wandering set} and is denoted by $\Omega(f)$.

\begin{lemma}
\label{lemaWillyBarney}
Let $f\colon X\to X$ be a homeomorphism satisfying the shadowing property.
For all $\eps>0$ there is $\delta>0$ such that if $x\in\Omega(f)$, $y\in X$, $n>0$ satisfy:
\begin{equation}
 \label{ecuLiYorkeTrucho}
 \left\{
 \begin{array}{l}
 \epsilon<\max\{\dist(f^k(x),f^k(y)):0\leq k< n\}=:\gamma,\\
 \max\{\dist(x,y),\dist(f^n(x),f^n(y))\}<\delta,
 \end{array}
 \right.
\end{equation}
then
there is $N\geq 1$ and a compact set $K\subset X$ such that $\sup_{k\in\Z}\diam(f^k(K))\leq 2\gamma$,
$f^N(K)=K$ and
$f^N\colon K\to K$ is semi-conjugate to a shift of two symbols.
In particular, $K$ is uncountable and $h(K)\geq\frac{\log(2)}N$.
\end{lemma}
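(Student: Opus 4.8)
The plan is to manufacture a full $2$-shift symbolically and then use the shadowing property to realize it inside $X$. From the two orbit segments in \reff{ecuLiYorkeTrucho} I would form two finite ``blocks'' of a common length $N$: a block $B_0$ running along the orbit of $x$ and a block $B_1$ running along the orbit of $y$, each completed by one common ``return piece'' that carries the pseudo-orbit back to $x$. Concatenating these blocks according to an arbitrary $\omega\in\{0,1\}^{\Z}$ yields a genuine pseudo-orbit $P(\omega)$; shadowing each $P(\omega)$ and collecting all shadowing points produces the desired set $K$, on which the assignment $z\mapsto\omega$ should be a continuous surjection onto $\{0,1\}^{\Z}$ intertwining $f^N$ with the shift. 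The hypothesis $x\in\Omega(f)$ enters only through the return piece, and $\gamma$ plays a double role: it bounds $\sup_{k\in\Z}\diam(f^k(K))$, and — being bounded below by $\epsilon$ while the shadowing accuracy is ours to choose small — it forces the coding to be well defined and continuous, playing the part that expansivity plays in the classical picture.

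Concretely, given $\epsilon$ I would fix $\eps_0\le\epsilon/3$, take $\delta_0>0$ a shadowing constant for accuracy $\eps_0$, and then pick $\delta<\delta_0/2$ small enough that $d(a,b)<\delta$ implies $d(f(a),f(b))<\delta_0/2$; only a single iterate of uniform continuity is used, so $\delta$ depends on $\epsilon$ alone. Since $f$ has the shadowing property, $\Omega(f)$ coincides with the chain recurrent set and its classes are $f$-invariant, so $f^n(x)$ and $x$ lie in one chain class and there is a $\delta$-pseudo-orbit $\rho=(\rho_0,\dots,\rho_s)$ with $\rho_0=f^n(x)$, $\rho_s=x$. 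Setting $N=n+s$, I would take
\[
B_0=(x,f(x),\dots,f^{n-1}(x),\rho_0,\dots,\rho_{s-1}),\qquad
B_1=(x,f(y),\dots,f^{n-1}(y),f^n(y),\rho_1,\dots,\rho_{s-1}),
\]
both of length $N$ and both starting at $x$. Using $d(x,y)<\delta$, $d(f^n(x),f^n(y))<\delta$ and the choice of $\delta$ one checks that each concatenation $P(\omega)$, indexed so that $P(\omega)_{jN+r}$ is the $r$-th entry of $B_{\omega_j}$, is a $\delta_0$-pseudo-orbit (with $P(\omega)_{jN}=x$ for all $j$), and that comparing the two blocks entry by entry gives the key estimate $d(P(\omega)_k,P(\omega')_k)\le\gamma$ for all $k,\omega,\omega'$, with equality $=\gamma$ at the coordinates $k=jN+r^*$ whenever $\omega_j\ne\omega'_j$, where $r^*\in\{1,\dots,n-1\}$ satisfies $d(f^{r^*}(x),f^{r^*}(y))=\gamma$.

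For each $\omega$ choose $z(\omega)$ with $d(f^k(z(\omega)),P(\omega)_k)\le\eps_0$ for all $k$, and set $K=\{z\in X:\ d(f^k(z),P(\omega)_k)\le\eps_0\ \text{for all }k,\ \text{some }\omega\in\{0,1\}^{\Z}\}$. Compactness of $\{0,1\}^{\Z}$ makes $K$ closed, hence compact; the identity $P(\omega)_{k+N}=P(\sigma\omega)_k$ gives $f^N(K)=K$; and the key estimate gives $\diam(f^k(K))\le\gamma+2\eps_0\le2\gamma$ for every $k$. Since $\gamma>2\eps_0$, the coordinate $r^*$ shows that one point cannot $\eps_0$-shadow two concatenations with different $\omega$, so $\pi(z)=\omega$ is a well-defined map $K\to\{0,1\}^{\Z}$; it is surjective (via the $z(\omega)$), satisfies $\pi\circ f^N=\sigma\circ\pi$, and is continuous (if $z_m\to z$ but $\pi(z_m)_j\ne\pi(z)_j$ along a subsequence, the coordinate $jN+r^*$ forces $d(f^{jN+r^*}(z_m),f^{jN+r^*}(z))\ge\gamma-2\eps_0>0$, against continuity of $f^{jN+r^*}$). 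Hence $f^N|_K$ is semi-conjugate to the full shift on two symbols, $K$ is uncountable, and $h(f|_K)\ge\frac{\log 2}{N}$ follows from the semiconjugacy and $h(f^N)=N\,h(f)$ (applied to the $f$-invariant compact set $\bigcup_{i=0}^{N-1}f^i(K)$).

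The routine part is the bookkeeping that keeps every jump of every concatenation below $\delta_0$ with $\delta$ independent of $n$ — handled above by letting both blocks start exactly at $x$, so only one-step continuity is invoked. The delicate point, and what I expect to be the crux, is the double use of $\gamma$: the inequality $d(P(\omega)_k,P(\omega')_k)\le\gamma$ is what pins $\sup_k\diam(f^k(K))$ to $2\gamma$, while the fact that it is attained (together with $\gamma>\epsilon\ge 3\eps_0$) is the substitute for expansivity that makes the symbolic coding both single-valued and continuous; arranging one construction to serve both purposes at once is the heart of the argument.
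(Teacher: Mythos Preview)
Your argument is correct and follows essentially the same route as the paper's proof: build two length-$N$ blocks from the $x$- and $y$-orbit segments together with a return piece, concatenate them according to $\{0,1\}^{\Z}$, shadow with accuracy a fixed fraction of $\epsilon$, and then use $\gamma>\epsilon$ both to bound $\diam(f^k(K))$ by $2\gamma$ and to make the coding $z\mapsto\omega$ well defined and continuous. The only cosmetic difference is in the return piece: the paper uses the definition of $\Omega(f)$ directly to get a genuine orbit segment (a point $z\in B(f^n(x),\delta)$ with $f^j(z)\in B(x,\delta)$, yielding $2\delta$-pseudo-orbits), while you pass through $\Omega(f)=CR(f)$ under shadowing to get a $\delta$-chain from $f^n(x)$ back to $x$; after that the constructions and estimates are identical.
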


\begin{proof}
Given $\epsilon>0$, by the shadowing property there is $\delta>0$ such that
every $2\delta$-pseudo orbit can be $\epsilon/4$-shadowed.
Suppose that $x\in\Omega(f)$, $y\in X$ and $n>0$ satisfy \eqref{ecuLiYorkeTrucho}.
Since $x\in\Omega(f)$, there are $j\geq 0$ and $z\in B(f^n(x),\delta)$ such that
$f^j(z)\in B(x,\delta)$.
Let $w\in\{x,y\}^\Z$ be a sequence with $w_k\in\{x,y\}$ for all $k\in\Z$.
We define a sequence $(\tilde w_k)_{k\in\Z}$ such that if $k=q(n+j)+r$ with $0\leq r<n+j$ then
\[
 \tilde w_k=\left\{
 \begin{array}{ll}
  f^r(w_q) & \text{if }0\leq r<n,\\
  f^{r-n}(z) & \text{if }n\leq r<n+j.
 \end{array}
 \right.
\]
In Figure \ref{figShadShift} we illustrate such sequences.
\begin{figure}[h]
 \includegraphics{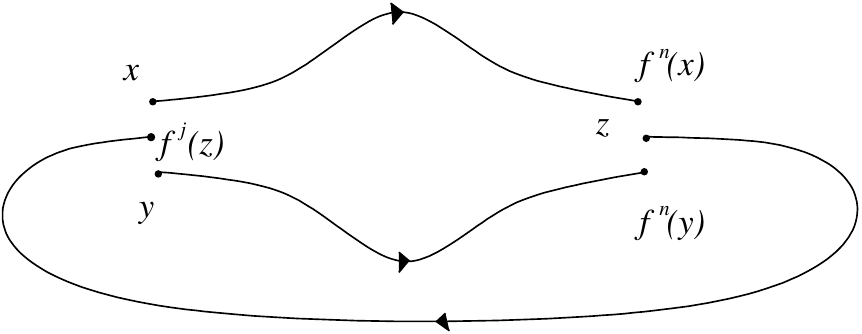}
 \caption{A sequence $\tilde w$ corresponds to an infinite cycle of this diagram.
 Each $w_k=x$ or $y$, indicates that at the $k$-turn the sequence follows the orbit segment of $x$ or $y$.}
 \label{figShadShift}
\end{figure}

Notice that each $\tilde w$ is a $2\delta$-pseudo orbit and if $w\neq v$ then
\[
 \epsilon<\sup_{k\in\Z}\dist(\tilde w_k,\tilde v_k)\leq\gamma.
\]
For $w\in\{x,y\}^\Z$ consider the set
\[
K_w=\left\{z\in X: \sup_{k\in\Z}\dist(f^k(z),\tilde w_k)\leq\epsilon/4\right\}.
\]
The shadowing property assures that each $K_w$ is not empty.
If $w\neq v$, $p\in K_w$ and $q\in K_v$ then
\[
\begin{array}{ll}
 \epsilon/2  & =-\epsilon/4+\epsilon-\epsilon/4\\
 & \leq\sup_{k\in\Z}-\dist(f^k(p),\tilde w_k)+\dist(\tilde w_k,\tilde v_k)-\dist(\tilde v_k,f^k(q))\\
 & \leq\sup_{k\in\Z}\dist(f^k(p),f^k(q))\\
 & \leq\sup_{k\in\Z}\dist(f^k(p),\tilde w_k)+\dist(\tilde w_k,\tilde v_k)+\dist(\tilde v_k,f^k(q))\\
 &\leq\epsilon/4+\gamma+\epsilon/4=\gamma+\epsilon/2<2\gamma
\end{array}
\]
and thus
\[
 \epsilon/2 \leq\sup_{k\in\Z}\dist(f^k(p),f^k(q))<2\gamma.
\]
In particular, $K_w\cap K_v=\emptyset$ if $w\neq v$. Define
\[
\displaystyle K=\bigcup_{w\in\{x,y\}^\Z} K_w.
\]
Note that $\diam(f^k(K))\leq 2\gamma$ for all $k\in\Z$ and
consider the map $h\colon K\to \{x,y\}^\Z$ given by $h(p)=w$ if $p\in K_w$.
It follows that $h\circ f^N=\sigma\circ h$, where $N=n+j$ and
$\sigma\colon\{x,y\}^\Z\to \{x,y\}^\Z$ is the shift homeomorphism.
We leave the remaining details to the reader (which are analogous to \cite{SuOp}*{Theorem 5.1}).
%
\end{proof}

\begin{rmk}
For a $C^\infty$ diffeomorphism $f$ of a smooth manifold, the entropy of the set $K$ given in Lemma \ref{lemaWillyBarney}
approaches zero as $\epsilon\to 0$.
This is due to a result of Buzzi \cite{Buzzi}*{Theorem 2.2}.
\end{rmk}

The following lemma, is crucial in our proof of Theorem \ref{teoEspectralLsh} and is a step where the L-shadowing property is indeed important. If some point $x\in X$ has a non-trivial dynamical ball, then the L-shadowing property assures that it also has a non-trivial asymptotic dynamical ball. We recall that the dynamical ball of $x\in X$ of radius $\delta>0$ is the set $$\Gamma_{\delta}(x):=\{y\in X \,\,\, ; \,\,\, d(f^n(x),f^n(y))\leq \delta \,\,\, \text{for every} \,\,\, n\in\Z\}$$ and define the \emph{asymptotic dynamical ball of $x$} of radius $\eps$ as the set $V^s_\epsilon(x)\cap V^u_\epsilon(x)$.

\begin{lema}
\label{lemLshByAsint} If $f$ has the L-shadowing property, then for all $\epsilon>0$ there exists $\delta>0$
 such that for any $x\in X$
 \[
  V^s_\epsilon(x)\cap V^u_\epsilon(x)=\{x\} \,\,\,\,\,\, \text{implies} \,\,\,\,\,\, \Gamma_\delta(x)=\{x\}.
 \]
\end{lema}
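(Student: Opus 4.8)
The plan is to prove the contrapositive: for $\delta$ small enough (in terms of $\epsilon$ and the L-shadowing modulus) I show that $\Gamma_\delta(x)\neq\{x\}$ forces $V^s_\epsilon(x)\cap V^u_\epsilon(x)\neq\{x\}$. Fix $\epsilon>0$, fix auxiliary scales $0<\epsilon_2<\epsilon_1<\epsilon$, use the L-shadowing property to get moduli $\delta_1$ for accuracy $\epsilon_1$ and $\delta_2$ for accuracy $\epsilon_2$, and take $\delta\le\min\{\delta_1,\delta_2\}$ small (its smallness being dictated by the inequalities below). Let $y\in\Gamma_\delta(x)$, $y\neq x$, so $d(f^k x,f^k y)\le\delta$ for all $k\in\Z$. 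If $y\in W^s(x)\cap W^u(x)$ we are done: then $y\in W^s_\delta(x)\cap W^u_\delta(x)\subseteq V^s_\epsilon(x)\cap V^u_\epsilon(x)$ and $y\neq x$. Since $\Gamma_\delta$, $V^s_\epsilon$, $V^u_\epsilon$ and the L-shadowing property are unchanged under $f\mapsto f^{-1}$ (which interchanges $V^s_\epsilon$ and $V^u_\epsilon$), I may therefore assume $y\notin W^s(x)$.

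First I produce heteroclinic points between $x$ and $y$. The sequence $w_k=f^k(x)$ for $k<0$ and $w_k=f^k(y)$ for $k\ge0$ is a $(\delta,L)$-pseudo orbit: its only nonzero consecutive error is $d(x,y)\le\delta$ at $k=-1$, and it vanishes for $|k|$ large. An $(\epsilon_2,L)$-shadower $z$ of $w$ satisfies $z\in W^u(x)\cap W^s(y)$, with $d(f^kz,f^kx)\le\epsilon_2$ for $k<0$ and $d(f^kz,f^kx)\le\epsilon_2+\delta$ for $k\ge0$; moreover $z\neq x$, because $y\notin W^s(x)$ makes $w$ non-asymptotic to the forward orbit of $x$, so $x$ cannot satisfy the limit part of the $(\epsilon_2,L)$-shadowing condition for $w$, whereas $z$ does. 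Symmetrically (roles of $x,y$ reversed in time) I get $z'\in W^u(y)\cap W^s(x)$ with $d(f^kz',f^kx)\le\epsilon_2$ for $k\ge0$.

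Next I close these up to a point homoclinic to the orbit of $x$. Since $z\in W^s(y)$, $z'\in W^s(x)$ and $d(f^k y,f^k x)\le\delta$, for every large $M$ one has $d(f^{M+1}z,f^{M+1}z')<\delta_1$; hence the sequence $w''_k=f^k(z)$ for $k\le M$ and $w''_k=f^k(z')$ for $k>M$ is a $(\delta_1,L)$-pseudo orbit, and it is asymptotic to the orbit of $x$ at both ends ($z\in W^u(x)$ on the left, $z'\in W^s(x)$ on the right). An $(\epsilon_1,L)$-shadower $\zeta$ of $w''$ therefore lies in $W^u(z)\cap W^s(z')=W^u(x)\cap W^s(x)$, so it is asymptotic to the orbit of $x$ in both directions, and combining $d(f^k\zeta,w''_k)\le\epsilon_1$ with the Step 1 estimates gives $d(f^k\zeta,f^kx)\le\epsilon_1+\epsilon_2+\delta\le\epsilon$ for all $k$ once the auxiliary constants are small. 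Thus $\zeta\in V^s_\epsilon(x)\cap V^u_\epsilon(x)$.

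The step I expect to require the real work is $\zeta\neq x$ — the L-shadowing property only asserts existence of a shadower, and $x$'s own orbit might itself $(\epsilon_1,L)$-shadow $w''$ — and it must be forced by using $y\notin W^s(x)$ quantitatively. Since $z\in W^s(y)$, $\limsup_{k\to\infty}d(f^kz,f^kx)=\limsup_{k\to\infty}d(f^ky,f^kx)=:\eta>0$, so the orbit of $z$ stays a definite distance from that of $x$ along arbitrarily large times; choosing the splice index $M$
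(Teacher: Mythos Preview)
Your sketch cuts off exactly at the point you correctly flag as the crux, and the idea you gesture at (``choosing the splice index $M$'') cannot close the gap. Here is why. With your scales $0<\epsilon_2<\epsilon_1<\epsilon$ and $\delta\le\min\{\delta_1,\delta_2\}$ fixed in advance, any $y\in\Gamma_\delta(x)$ gives $\eta:=\limsup_{k\to\infty}d(f^ky,f^kx)\le\delta$, and since $z\in W^s(y)$ the same bound holds for $z$. Your Step~1 estimates then give $d(f^kx,w''_k)\le\epsilon_2+\delta$ for $k\le M$ and $d(f^kx,w''_k)\le\epsilon_2$ for $k>M$, while $z\in W^u(x)$ and $z'\in W^s(x)$ supply the limit conditions. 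Thus $x$ itself is always an $(\epsilon_2+\delta,L)$-shadower of $w''$, hence an $(\epsilon_1,L)$-shadower whenever $\epsilon_1\ge\epsilon_2+\delta$ (the natural regime). Since the L-shadowing property only guarantees \emph{existence} of a shadower, you cannot infer any $\zeta\ne x$; and this is independent of $M$, because the obstruction is that $\eta$ lives at the scale of $\delta$, which is the smallest scale in your hierarchy, far below $\epsilon_1$.

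The paper escapes this by choosing the final shadowing accuracy only \emph{after} a positive separation from $x$ is secured. It first uses Theorem~\ref{teoCharLsh} to upgrade $y\in\Gamma_\delta(x)$ to some $z\in V^u_\rho(x)\cap\Gamma_\rho(x)\setminus\{x\}$ (so one already has $z\in W^u(x)$ with $d(x,z)>0$ known). If $\liminf_{n\to\infty}d(f^nz,f^nx)=0$, one L-shadows a pseudo-orbit built from a finite segment of $z$'s orbit at accuracy $r/2$ with $r=d(x,z)$; the shadower $p$ satisfies $d(p,z)<r/2$, forcing $p\ne x$. If instead $d(f^nz,f^nx)\ge\alpha>0$ for all $n\ge0$, the paper passes to subsequential limits $f^{n_k}(x)\to x^*$, $f^{n_k}(z)\to z^*$ with $d(f^ix^*,f^iz^*)\ge\alpha$ for all $i$, invokes the product structure again to get $w\in V^s_{\epsilon/4}(x^*)\cap V^u_{\epsilon/4}(z^*)$, and only then L-shadows (at accuracy $\mu<\alpha-2\gamma$) a pseudo-orbit that splices $z$, $w$, $x$; at a specific time the pseudo-orbit is provably $\alpha$-far from the orbit of $x$, so $x$ cannot $\mu$-shadow it. Your argument is missing exactly this second layer: a mechanism, depending on the particular $x$ and $y$, that produces a pseudo-orbit which $x$ cannot $(\epsilon',L)$-shadow for some $\epsilon'$ still comparable to $\epsilon$.
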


\begin{proof}
For each $\epsilon>0$, there exists $\rho\in (0,\epsilon/2)$ given by Theorem \ref{teoCharLsh} such that
$$\dist(a,b)<\rho \,\,\,\,\,\, \text{implies} \,\,\,\,\,\, V^s_{\epsilon/4}(a)\cap V^u_{\epsilon/4}(b)\neq\emptyset.$$
Also, there is $\delta\in(0,\rho)$ such that $$\dist(a,b)<\delta \,\,\,\,\,\, \text{implies} \,\,\,\,\,\, V^s_\rho(b)\cap V^u_\rho(a)\neq\emptyset.$$
Suppose that there is $y\in \Gamma_\delta(x)\setminus\{x\}$
and let $u\in V_\rho^s(y)\cap V_\rho^u(x)$.
If $u=x$ then $y\in V^s_\rho(x)\cap\Gamma_\delta(x)$.
If $u\neq x$ then $u\in V^u_\rho(x)\cap\Gamma_\rho(x)$.
In any case, we obtain a point $z$ ($z=u$ or $y$) different from $x$ belonging to $V^\sigma_\rho(x)\cap\Gamma_\rho(x)$, for $\sigma=s$ or $u$.
Considering $f^{-1}$ if needed, we assume that
\begin{equation}
 \label{ecuLimShPtoz}
 z\in V^u_\rho(x)\cap\Gamma_\rho(x)\setminus\{x\}.
\end{equation}

If $\liminf_{n\to +\infty} \dist(f^n(x),f^n(z))=0$ then the L-shadowing finishes the proof as follows.
Let $r=d(x,z)$ and choose $s>0$ given by the L-shadowing property for $r/2$. Let $k\in\N$ be such that
$$d(f^{-k}(z),f^{-k}(x))<s \,\,\,\,\,\, \text{and} \,\,\,\,\,\, d(f^k(z),f^k(x))<s$$ and note that the past orbit of $f^{-k}(x)$, the
segment of orbit from $f^{-k}(z)$ to $f^k(z)$ and the future orbit of $f^k(x)$ is a $(s,L)$-pseudo orbit of $f$ and, hence, is $(r,L)$-shadowed by $p\in X$.
Then $p\in V^s_\epsilon(x)\cap V^u_\epsilon(x)$ and $p\neq x$ because $d(p,z)<r/2$.

Then, we can choose $\alpha>0$ such that $\dist(f^n(x),f^n(z))>\alpha$ for all $n\geq 0$.
Let $\mu\in (0,\epsilon/2)\cap (0,\alpha/3)$
and consider $\gamma\in (0,\alpha/3)\cap (0,\epsilon/4)$ such that
every $(2\gamma,L)$-pseudo orbit is $(\mu,L)$-shadowed.
Note that $\alpha-2\gamma-\mu>0$.
As $X$ is compact, there are $n_k\to +\infty$ and $x^*,z^*\in X$ such that
$f^{n_k}(x)\to x^*$ and $f^{n_k}(z)\to z^*$.
The continuity of $f$ assures that
\[\alpha\leq\dist(f^i(x^*),f^i(z^*))\leq\rho \,\,\,\,\,\, \text{for all} \,\,\,\,\,\, i\in\Z. \]
This implies that there is
\begin{equation}
 \label{ecuLShw}
 w\in V^s_{\epsilon/4}(x^*)\cap V^u_{\epsilon/4}(z^*).
\end{equation}
In Figure \ref{figShadLim} the situation is illustrated.
\begin{figure}[h]
 \includegraphics{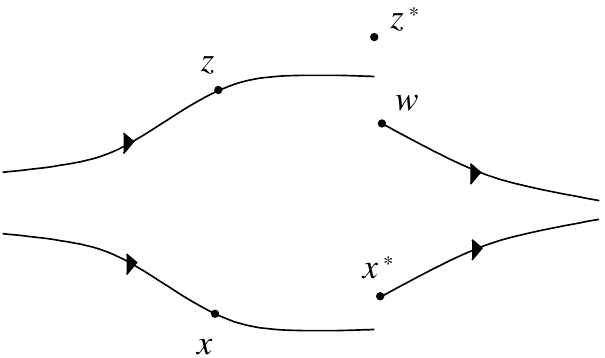}
 \caption{Illustration for the proof of Lemma \ref{lemLshByAsint}.}
 \label{figShadLim}
\end{figure}

Let $\ell\geq 1$ be such that $$\dist(f^{\ell}(w),f^{\ell}(x^*))<\gamma \,\,\,\,\,\, \text{and} \,\,\,\,\,\, \dist(f^{-\ell}(w),f^{-\ell}(z^*))<\gamma$$
and choose $m\geq 1$ such that
\begin{equation}
 \label{ecuLimShm}
\left.
\begin{array}{l}
\dist(f^{m+j}(x),f^j(x^*))<\gamma\\
\dist(f^{m-j}(z),f^{-j}(z^*))<\gamma
\end{array}
\right\}
\text{ whenever }|j|\leq \ell.
\end{equation}
Consider the $2\gamma$-pseudo orbit
\[
 p_i=
 \left\{
 \begin{array}{ll}
  f^i(z) & \text{ if }\,\, i<m-\ell,\\
  f^{i-m}(w) & \text{ if } \,\,m-\ell\leq i<m+\ell, \\
  f^i(x) & \text{ if }\,\,m+\ell\leq i.\\
 \end{array}
 \right.
\]
Then, there is $q\in W^s(x)\cap W^u(z)$ such that
$\dist(f^i(q),p_i)<\mu$ for all $i\in\Z$.
As $z\in W^u(x)$ it follows that
$q\in W^s(x)\cap W^u(x)$. Note that $q\in\Gamma_\epsilon(x)$ since for each $i\in\Z$ we have
\begin{eqnarray*}
 \dist(f^i(q),f^i(x))&\leq&
  \dist(f^i(q),p_i)+\dist(p_i,f^i(x))\\
&<& \mu+\dist(p_i,f^i(x))\\
&<&\frac{\epsilon}{2}+\dist(p_i,f^i(x)).
\end{eqnarray*}
 Also, note that $\dist(p_i,f^i(x))<\epsilon/2$ for all $i\in\Z$ since for $i\geq m+\ell$ we have $p_i=f^i(x)$,
if $i<m-l$, then $p_i=f^i(z)$ and by \eqref{ecuLimShPtoz} we conclude that
$$\dist(f^i(x),f^i(z))\leq\rho<\epsilon/2$$
and if $m-l\leq i<m+l$, then $p_i=f^{i-m}(w)$ and by \eqref{ecuLShw} and \eqref{ecuLimShm} we obtain
\begin{eqnarray*}
 \dist(f^i(x),f^{i-m}(w))&\leq& \dist(f^i(x),f^{i-m}(x^*))+\dist(f^{i-m}(x^*),f^{i-m}(w))\\
 &\leq& \gamma +\frac{\epsilon}{4}<\frac{\epsilon}{2}.
\end{eqnarray*}
To prove that $q\neq x$ note that
\begin{eqnarray*}
 \dist(f^{m-\ell}(x),f^{m-\ell}(q))&\geq& \dist(f^{m-\ell}(x),p_{m-\ell})-\dist(p_{m-\ell},f^{m-\ell}(q))\\
 &>& \dist(f^{m-\ell}(x),f^{-\ell}(w))-\mu\\
  &\geq&\dist(f^{-\ell}(x^*),f^{-\ell}(z^*))-
 \dist(f^{-\ell}(x^*),f^{m-\ell}(x))\\
 & &-\dist(f^{-\ell}(w),f^{-\ell}(z^*))-\mu\\
& \geq& \alpha-2\gamma-\mu>0.
\end{eqnarray*}
This finishes the proof.
\end{proof}

\begin{proof}[Proof of Theorem \ref{teoEspectralLsh}]
Let $f$ be a homeomorphism satisfying the $L$-shadowing property. Since it admits a finite number of chain recurrent classes, there exists $r>0$ such that the $2r$-neighborhoods of all its chain recurrent classes are disjoint. This also implies that each class is the maximal invariant set of its $2r$-neighborhood. Let $\eps$ be an arbitrary number in $(0,r)$ and consider $\delta\in(0,\eps)$, given by Lemma \ref{lemLshByAsint}, such that $$\Gamma_\delta(x)\neq\{x\}\,\,\,\,\,\, \text{implies} \,\,\,\,\,\, V^s_\epsilon(x)\cap V^u_\epsilon(x)\neq\{x\}.$$
Let $\Lambda\subset X$ be a chain recurrent class of $f$ and assume it is not expansive. Then there exists $x\in\Lambda$ and $y\in\Gamma_{\delta}(x)\setminus\{x\}$. Lemma \ref{lemLshByAsint} assures the existence of $$z\in V^s_\epsilon(x)\cap V^u_\epsilon(x)\setminus\{x\}.$$ Let $\eps_1=d(x,z)$, $\delta_1>0$ given by Lemma \ref{lemaWillyBarney} for $\eps_1$ and choose $k_0\in\N$ such that $$d(f^k(z),f^k(x))<\delta_1\,\,\,\,\,\, \text{whenever} \,\,\,\,\,\, |k|\geq k_0.$$ Then Lemma \ref{lemLshByAsint} assures the existence of a compact set $K\subset X$ such that $$\sup_{k\in\Z}\diam(f^k(K))\leq 2\eps$$ and $N\geq 1$ such that $f^N(K)=K$ and
$f^N\colon K\to K$ is semi-conjugate to a shift. Since the orbit of $K$ is contained in the $2\eps$-neighborhood of $\Lambda$, it is indeed contained in $\Lambda$ by the choice of $r$. Since this can be done for each $\eps\in(0,r)$, it follows that $\Lambda$ admits arbitrarily small topological semi-horseshoes.
\end{proof}

\begin{proof}
[Proof of Corollary \ref{CountableEntropy}]
In general, expansivity implies
countable-expansivity and
entropy-expansivity.
As topological semi-horseshoes have positive entropy and contain
uncountably many points, the results follows from Theorem \ref{teoEspectralLsh}.
\end{proof}

\section{Examples}

In this Section we prove Theorems \ref{Examples} and \ref{aperiodic}, where examples of homeomorphisms satisfying the L-shadowing property are obtained.

\vspace{+0.3cm}

\subsection{Structurally stable diffeomorphisms}

We begin by proving the L-shadowing property for structurally stable diffeomorphisms. This is a corollary of the following more general result, since structurally stable diffeomorphisms admit the shadowing property (see \cite{Rob}) and have an expansive non-wandering set (see \cite{Man}).

\begin{theorem}\label{non-wandering}
If $f$ is a homeomorphism, defined in a compact metric space, satisfying the shadowing property and such that $\Omega(f)$ is expansive, then $f$ has the L-shadowing property.
\end{theorem}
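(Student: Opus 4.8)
The plan is to deduce the conclusion from the characterization of Theorem~\ref{teoCharLsh}. Since $f$ is already assumed to have the shadowing property, it suffices to establish the local product estimate: for every $\eps>0$ there is $\delta>0$ such that $\dist(x,y)<\delta$ implies $V^s_\eps(x)\cap V^u_\eps(y)\neq\emptyset$. Fix an expansivity constant $c>0$ for $f|_{\Omega(f)}$, so that $a,b\in\Omega(f)$ with $\dist(f^n(a),f^n(b))\le c$ for all $n\in\Z$ must be equal. Because $V^s_\eps$ and $V^u_\eps$ increase with $\eps$, it is enough to treat the case $\eps<c$.

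Given such $\eps$, I would first produce a shadowing point with the desired \emph{local} behaviour. Pick an auxiliary accuracy $\eps'\in(0,\eps)$, let $\delta_0$ be a shadowing constant for $\eps'$, and then choose $\delta<\eps'$ so small that $\dist(x,y)<\delta$ forces $\dist(f(x),f(y))<\delta_0$ (uniform continuity of $f$ on the compact space). For $x,y$ with $\dist(x,y)<\delta$, form the pseudo orbit $(x_k)_{k\in\Z}$ that follows the past orbit of $x$ for $k\le 0$ and the orbit of $y$ for $k\ge 1$; its only discontinuity, at $k=0$, has size $\dist(f(x),f(y))<\delta_0$, so shadowing yields $z\in X$ with $\dist(f^k(z),f^k(x))<\eps'$ for $k\le 0$ and $\dist(f^k(z),f^k(y))<\eps'$ for $k\ge 1$. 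Hence $z\in W^u_{\eps'}(x)$, and since $\dist(z,y)\le\dist(z,x)+\dist(x,y)<\eps'+\delta<\eps$ we also get $z\in W^s_\eps(y)$.

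The remaining step, and the only substantive one, is to upgrade these local relations to the asymptotic ones, and this is exactly where expansivity of $\Omega(f)$ is used. I claim $\dist(f^k(z),f^k(y))\to 0$ as $k\to+\infty$. If not, along some $n_j\to+\infty$ we would have $\dist(f^{n_j}(z),f^{n_j}(y))\ge\eta>0$; passing to a subsequence, $f^{n_j}(z)\to z^*$ and $f^{n_j}(y)\to y^*$, and both limits lie in $\Omega(f)$ since $\omega$-limit points are non-wandering. For each fixed $m\in\Z$ and $j$ large, $n_j+m\ge 1$, so $\dist(f^{n_j+m}(z),f^{n_j+m}(y))<\eps'$; letting $j\to\infty$ gives $\dist(f^m(z^*),f^m(y^*))\le\eps'<c$ for all $m\in\Z$. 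As $\Omega(f)$ is $f$-invariant, $z^*$ and $y^*$ have full orbits in $\Omega(f)$, so expansivity forces $z^*=y^*$, contradicting $\dist(z^*,y^*)\ge\eta$. Thus $z\in W^s(y)\cap W^s_\eps(y)=V^s_\eps(y)$. Running the identical argument for $f^{-1}$, with $\alpha$-limit points and the estimates for $k\le 0$, gives $z\in V^u_\eps(x)$. Therefore $z\in V^u_\eps(x)\cap V^s_\eps(y)$; as the hypothesis $\dist(x,y)<\delta$ is symmetric in $x$ and $y$, this is exactly the intersection property required by Theorem~\ref{teoCharLsh}, and the L-shadowing property follows.

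I do not expect a serious obstacle here beyond careful bookkeeping: the one delicate point is to keep the auxiliary shadowing accuracy $\eps'$ strictly below the expansivity constant of $\Omega(f)$, so that the limiting orbits $z^*,y^*$ — which fall into $\Omega(f)$ automatically — are forced to coincide. The formation of the pseudo orbit, the appeal to uniform continuity, and the compactness argument are all routine.
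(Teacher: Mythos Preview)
Your argument is correct and follows essentially the same route as the paper: both reduce to the characterization of Theorem~\ref{teoCharLsh}, use shadowing to produce a point in $W^u_\eps(x)\cap W^s_\eps(y)$, and then upgrade the local inclusions to asymptotic ones via an $\omega$-limit argument exploiting expansivity of $\Omega(f)$. The only cosmetic difference is that the paper isolates this upgrading step as a separate proposition (Proposition~\ref{propOmegaExp}, stating $W^s_c(x)\subset W^s(x)$ for all $x\in X$ when $\Omega(f)$ is expansive), whereas you inline the same compactness-plus-expansivity argument directly.
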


To prove this theorem
we use the following proposition, which is well known for $f\colon X\to X$ expansive. 
We will only assume expansivity for the restriction to the non-wandering set.

\begin{prop}\label{propOmegaExp}
If $\Omega(f)$ is expansive, with expansivity constant $c>0$,
then $$W^s_c(x)\subset W^s(x) \,\,\,\,\,\, \text{and} \,\,\,\,\,\, W^u_c(x)\subset W^u(x) \,\,\,\,\,\, \text{for all} \,\,\,\,\,\, x\in X.$$
\end{prop}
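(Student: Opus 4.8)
The plan is to run the classical argument showing that for an expansive homeomorphism the $c$-local stable set is contained in the stable set, and to notice that the only place expansivity gets used is at points of an $\omega$-limit set — and those automatically lie in $\Omega(f)$, which is precisely where expansivity is assumed. It suffices to treat $W^s_c$: the inclusion $W^u_c(x)\subset W^u(x)$ follows by applying the $W^s$ statement to $f^{-1}$, since $\Omega(f^{-1})=\Omega(f)$ and $f^{-1}$ has the same expansivity constant on it.

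First I would fix $x\in X$ and $y\in W^s_c(x)$, i.e. $d(f^n(x),f^n(y))\le c$ for all $n\ge 0$, and argue by contradiction assuming $y\notin W^s(x)$. Then there are $\epsilon>0$ and $n_k\to+\infty$ with $d(f^{n_k}(x),f^{n_k}(y))\ge\epsilon$. By compactness of $X$, after passing to a subsequence I may assume $f^{n_k}(x)\to x^*$ and $f^{n_k}(y)\to y^*$, so $d(x^*,y^*)\ge\epsilon>0$ and in particular $x^*\neq y^*$. The two key observations are: (i) $x^*\in\omega(x)\subset\Omega(f)$ and $y^*\in\omega(y)\subset\Omega(f)$, because $\omega$-limit points are non-wandering; and (ii) $y^*\in\Gamma_c(x^*)$. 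For (ii), fix $m\in\Z$; for every $k$ large enough that $n_k+m\ge 0$ we have $d(f^{n_k+m}(x),f^{n_k+m}(y))\le c$ since $y\in W^s_c(x)$, and letting $k\to\infty$ and using continuity of $f$ and of $d$ gives $d(f^m(x^*),f^m(y^*))\le c$. As $\Omega(f)$ is $f$-invariant, $x^*,y^*\in\Omega(f)$, so expansivity of $\Omega(f)$ with constant $c$ together with $d(f^m(x^*),f^m(y^*))\le c$ for all $m\in\Z$ forces $x^*=y^*$, contradicting $d(x^*,y^*)\ge\epsilon$. Hence $d(f^n(x),f^n(y))\to 0$, that is $y\in W^s(x)$.

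The argument is essentially routine, so there is no serious obstacle; the one point worth emphasizing is exactly why we do not need $f$ to be expansive on all of $X$ — the limit points $x^*,y^*$ produced by compactness live in $\Omega(f)$, which is where expansivity is available. The only mild technical wrinkle is the mismatch between the one-sided condition defining $W^s_c$ (control of $f^n$ for $n\ge 0$ only) and the two-sided condition in the definition of expansivity; this is resolved precisely by the shift-to-the-limit step, in which the index window $\{n\ge 0\}$ is translated by $-n_k\to-\infty$ and, in the limit, exhausts all of $\Z$, yielding $y^*\in\Gamma_c(x^*)$.
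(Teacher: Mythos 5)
Your argument is correct and is essentially the paper's own proof: negate the conclusion, extract convergent subsequences $f^{n_k}(x)\to x^*$, $f^{n_k}(y)\to y^*$ with $d(x^*,y^*)\ge\epsilon$, observe that these limits lie in $\Omega(f)$ and that $d(f^m(x^*),f^m(y^*))\le c$ for all $m\in\Z$, and contradict expansivity on $\Omega(f)$. Your explicit justifications (that the limit points are non-wandering, and the reduction of $W^u_c$ to $W^s_c$ via $f^{-1}$) are details the paper leaves implicit, but the route is the same.
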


\begin{proof}
Suppose that $\dist(f^n(x),f^n(y))\leq c$ for all $n\geq 0$.
Arguing by contradiction, suppose that there are $n_k\to+\infty$ and $\epsilon>0$ such that
$$\dist(f^{n_k}(x),f^{n_k}(y))>\epsilon \,\,\,\,\,\, \text{for all} \,\,\,\,\,\,k\in\N.$$
As $X$ is compact, we can assume that
$f^{n_k}(x)\to x_*$ and $f^{n_k}(y)\to y_*$, when $k\to\infty$, where $x_*,y_*\in\Omega(f)$.
From the continuity of $f$ we have
$$\epsilon\leq\dist(f^i(x_*),f^i(y_*))\leq c\,\,\,\,\,\, \text{for all} \,\,\,\,\,\,i\in\Z.$$
This contradicts that $c$ is an expansivity constant of $\Omega(f)$ and ends the proof.
\end{proof}

\begin{proof}[Proof of Theorem \ref{non-wandering}]
Let $c>0$ be such that $\Gamma_c(x)=\{x\}$ for every $x\in\Omega(f)$. For each $\eps\in(0,c)$, the shadowing property assures the existence of $\delta\in(0,\eps)$ such that $$W^u_{\eps}(x)\cap W^s_{\eps}(y)\neq\emptyset \,\,\,\,\,\, \text{whenever} \,\,\,\,\,\, d(x,y)<\delta.$$  The previous proposition assures that $$W^s_{\eps}(x)\subset W^s(x) \,\,\,\,\,\, \text{and} \,\,\,\,\,\, W^u_{\eps}(x)\subset W^u(x) \,\,\,\,\,\, \text{for all} \,\,\,\,\,\, x\in X.$$ 
Then, it follows that $V^s_{\eps}(x)\cap V^u_{\eps}(x)\neq\emptyset$ whenever $d(x,y)<\delta$. Since this can be done for each $\eps>0$, Theorem \ref{teoCharLsh} proves the L-shadowing property.
\end{proof}

\vspace{+0.3cm}

\subsection{Pseudo-Anosov diffeomorphisms of $\mathbb{S}^2$}

Pseudo-Anosov diffeomorphisms of the sphere $\mathbb{S}^2$ can be constructed as follows. Consider an Anosov diffeomorphism $f_A$ of the torus $\mathbb{T}^2$ induced by a hyperbolic $2\times2$ matrix with integer coefficients and determinant one. The sphere $\mathbb{S}^2$ can be seen as the quotient of $\mathbb{T}^2$ by the antipodal map and then $f_A$ induces a homeomorphism $g_A\colon \mathbb{S}^2\to\mathbb{S}^2$. To prove that $g_A$ has the L-shadowing property, it is enough to prove the following theorem, since the antipodal quotient is an open map:

\begin{theorem}
\label{teoProjLsh}
Let $M$ and $N$ be compact metric spaces, $f\colon M\to M$ and $g\colon N\to N$ be homeomorphisms and $q\colon M\to N$ a continuous, onto and open map such that $q\circ f=g\circ q$. If $f$ has the $L$-shadowing property, then $g$ also has the $L$-shadowing property.
\end{theorem}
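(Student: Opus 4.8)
The plan is to lift a $(\delta,L)$-pseudo orbit of $g$ to one of $f$, shadow it upstairs by the $L$-shadowing property of $f$, and project the shadowing point back down by $q$. The one genuinely new ingredient is a \emph{uniform openness} property of $q$, which I would establish first: since $q\colon M\to N$ is a continuous open surjection of compact metric spaces, for every $\rho>0$ there is $\beta(\rho)>0$ with $B_N(q(z),\beta(\rho))\subseteq q\bigl(B_M(z,\rho)\bigr)$ for \emph{all} $z\in M$. This follows by a contradiction-and-compactness argument: if it failed there would be $\rho>0$, a convergent sequence $z_n\to z$ in $M$ and $y_n\to q(z)$ in $N$ with $y_n\notin q(B_M(z_n,\rho))$; but $q(B_M(z,\rho/2))$ is open and contains $q(z)$, hence contains $y_n$ for large $n$, and then $y_n\in q(B_M(z,\rho/2))\subseteq q(B_M(z_n,\rho))$ (using $d_M(z_n,z)<\rho/2$), a contradiction. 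Replacing $\beta(\rho)$ by $\inf_{0<\rho'\le\rho}\beta(\rho')$ I may take $\beta$ nondecreasing; it is positive on $(0,\infty)$. Besides $\beta$, the only quantitative input is uniform continuity of $q$, $f$ and $g^{-1}$ (valid by compactness), with moduli $\omega_q,\omega_f,\omega_{g^{-1}}$; note also $q\circ f^{-1}=g^{-1}\circ q$.

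Now fix $\eps>0$ for $g$. I would choose $\eps_1>0$ so that $d_M(a,b)\le\eps_1$ implies $d_N(q(a),q(b))\le\eps$; let $\delta_1>0$ be the constant given by the $L$-shadowing property of $f$ applied to $\eps_1$; pick $\rho_*\in(0,\delta_1]$ with $\omega_f(\rho_*)\le\delta_1$; and finally take $\delta>0$ with $\delta<\beta(\rho_*)$ and $\omega_{g^{-1}}(\delta)<\beta(\rho_*)$. I claim this $\delta$ works. Let $(x_k)_{k\in\Z}$ be a $(\delta,L)$-pseudo orbit of $g$, with errors $e_k:=d_N(g(x_k),x_{k+1})$. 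Starting from any $\tilde x_0\in q^{-1}(x_0)$, I build a lift $(\tilde x_k)$ with $q(\tilde x_k)=x_k$ inductively in both directions. \emph{Forward:} given $\tilde x_k$ one has $q(f(\tilde x_k))=g(x_k)$, so $x_{k+1}\in B_N(g(x_k),e_k)$; choosing a radius $\rho_k\le\rho_*$ with $e_k<\beta(\rho_k)$, uniform openness gives $\tilde x_{k+1}\in B_M(f(\tilde x_k),\rho_k)$ over $x_{k+1}$, so $d_M(f(\tilde x_k),\tilde x_{k+1})<\rho_k$. \emph{Backward:} given $\tilde x_{k+1}$ one has $q(f^{-1}(\tilde x_{k+1}))=g^{-1}(x_{k+1})$ and $d_N(x_k,g^{-1}(x_{k+1}))\le\omega_{g^{-1}}(e_k)$; choosing $\rho'_k\le\rho_*$ with $\omega_{g^{-1}}(e_k)<\beta(\rho'_k)$, uniform openness gives $\tilde x_k\in B_M(f^{-1}(\tilde x_{k+1}),\rho'_k)$ over $x_k$, hence $d_M(f(\tilde x_k),\tilde x_{k+1})\le\omega_f(\rho'_k)$.

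The reason for keeping the radii $\rho_k,\rho'_k$ variable is to obtain both requirements of a $(\delta_1,L)$-pseudo orbit at once: taking them always $\le\rho_*$ forces every $f$-error to be $<\rho_*\le\delta_1$ (forward) or $\le\omega_f(\rho_*)\le\delta_1$ (backward), while letting them tend to $0$ as $|k|\to\infty$ — possible since $e_k\to0$ and $\omega_{g^{-1}}(e_k)\to0$ while $\beta$ is positive and nondecreasing, by a routine block-by-block choice — forces the $f$-errors to $0$ as well. Thus $(\tilde x_k)$ is a genuine $(\delta_1,L)$-pseudo orbit of $f$, and the $L$-shadowing property of $f$ yields $z\in M$ with $d_M(f^k(z),\tilde x_k)\le\eps_1$ for all $k$ and $d_M(f^k(z),\tilde x_k)\to0$ as $|k|\to\infty$. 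Setting $\bar z:=q(z)$, for every $k$ we get $d_N(g^k(\bar z),x_k)=d_N(q(f^k(z)),q(\tilde x_k))\le\eps$ by the choice of $\eps_1$, and $d_N(g^k(\bar z),x_k)\to0$ as $|k|\to\infty$ by uniform continuity of $q$. Hence $\bar z$ $(\eps,L)$-shadows $(x_k)$, so $g$ has the $L$-shadowing property.

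The step I expect to be the main obstacle is the uniform openness lemma for $q$ — turning pointwise openness into a modulus $\beta$ valid over all of the compact space $M$ — together with the care needed to lift the \emph{asymptotic} part of the pseudo orbit (errors tending to $0$), which forces the lifting radii to shrink along the orbit instead of being fixed. Both are handled by compactness, but they are where one must actually be careful rather than merely formal.
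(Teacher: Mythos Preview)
Your proof is correct and follows essentially the same route as the paper: establish a uniform-openness modulus for $q$ (the paper cites this as \cite{AAV}*{Lemma 5.1}, while you prove it directly by compactness), use it with shrinking radii along blocks to lift a $(\delta,L)$-pseudo orbit of $g$ to one of $f$, apply the $L$-shadowing of $f$, and project the shadowing point via $q$. One cosmetic slip: replacing $\beta(\rho)$ by $\inf_{0<\rho'\le\rho}\beta(\rho')$ need not stay positive for an arbitrary choice of $\beta$ --- the monotone modulus you want is $\sup_{0<\rho'\le\rho}\beta(\rho')$ (valid since any $\beta$ working for $\rho'\le\rho$ also works for $\rho$) --- but this does not affect the argument, since your block-by-block choice of $\rho_k$ only uses that $\beta(\rho)>0$ for each $\rho>0$.
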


\begin{proof}
The argument is based on \cite{AAV}*{Proposition 5.2} where the shadowing property of $g$ is proved assuming the shadowing property of $f$. Now we suppose that $f$ has the $L$-shadowing property and prove the $L$-shadowing property for $g$. Let $\eps>0$ be arbitrary and $\eps'>0$ be given by the uniform continuity of $q$ such that $$d(a,b)<\eps' \,\,\,\,\,\, \text{implies} \,\,\,\,\,\, d(q(a),q(b))<\eps.$$ Choose $\delta'>0$, given by the L-shadowing property of $f$ such that every $(\delta',L)$-pseudo orbit of $f$ is $(\eps',L)$-shadowed. Since $q$ is an open map, \cite{AAV}*{Lemma 5.1} assures the existence of $\delta>0$ such that $$B(q(x),\delta)\subset q(B(x,\delta')) \,\,\,\,\,\, \text{for every} \,\,\,\,\,\, x\in X.$$
Let $(x_k)_{k\in\Z}$ be a $(\delta,L)$-pseudo orbit of $g$. We will lift it to a $(\delta',L)$-pseudo orbit $(y_k)_{k\in\Z}$ in $M$ satisfying $$y_k\in q^{-1}(x_k) \,\,\,\,\,\, \text{for every} \,\,\,\,\,\, k\in\Z.$$ We first lift the positive part of $(y_k)_{k\in\Z}$ and with a similar argument we lift its negative part. For each $j\in\N$, let $\eps_j=\frac{\delta'}{j+1}$ and choose $\delta_j>0$ such that $$B(q(x),\delta_j)\subset q(B(x,\eps_j)) \,\,\,\,\,\, \text{for every} \,\,\,\,\,\, x\in X \,\,\,\,\,\, \text{and} \,\,\,\,\,\, j\in\N.$$
Since $(x_k)_{k\in\N}$ is a limit pseudo orbit of $g$, we can choose an increasing sequence $(k_j)_{j\in\N}$ of natural numbers such that $$d(g(x_k),x_{k+1})<\delta_j \,\,\,\,\,\, \text{for every} \,\,\,\,\,\, k\geq k_j.$$ We will define $(y_k)_{k\in\N}$ by induction in each interval of natural numbers between $k_j$ and $k_{j+1}$ in such a way that $$d(f(y_k),y_{k+1})<\eps_j \,\,\,\,\,\, \text{whenever} \,\,\,\,\,\, k_j< k\leq k_{j+1}.$$
Let $y_0$ be any point in $q^{-1}(x_0)$ and note that $d(g(x_0),x_1)<\delta$ implies the existence of $y_1\in B(f(y_0),\delta')$ such that $q(y_1)=x_1$. Also, $d(g(x_1),x_2)<\delta$ implies the existence of $y_2\in B(f(y_1),\delta')$ such that $q(y_2)=x_2$. Repeating this argument we define $y_k$ for $0\leq k\leq k_1$. Since $d(g(x_{k_1}),x_{k_1+1})<\delta_1$, there exists $y_{k_1+1}\in B(f(y_{k_1}),\eps_1)$ such that $q(y_{k_1+1})=x_{k_1+1}$. We repeat this argument again to define $y_k$ for $k_1< k\leq k_2$. An induction process defines the sequence $(y_k)_{k\in\N}$ with desired properties since
$$d(g(x_k),x_{k+1})<\delta_j \,\,\,\,\,\, \text{whenever} \,\,\,\,\,\, k_j<k\leq k_{j+1}$$ and, hence, $d(f(y_k),y_{k+1})<\eps_j$. It follows that $(y_k)_{k\in\N}$ is a $\delta'$-pseudo orbit of $f$, since $\eps_j<\delta'$ for every $j\in\N$, and also a limit pseudo orbit of $f$ since $\eps_j\to0$ when $j\to\infty$. With a similar argument we lift the negative part of $(x_k)_{k\in\Z}$ and define the whole sequence $(y_k)_{k\in\Z}$.
Finally, the $L$-shadowing property of $f$ assures the existence of $z\in X$ that $(\eps',L)$-shadows $(y_k)_{k\in\Z}$ and, hence, $q(z)$ $(\eps,L)$-shadows $(x_k)_{k\in\Z}$. This proves the $L$-shadowing property for $g$.
\end{proof}

A corollary of this theorem and Proposition \ref{mixing} is the following.

\begin{corollary}
The pseudo-Anosov diffeomorphism $g_A$ of the two-dimensional sphere satisfies the two-sided limit shadowing property.
\end{corollary}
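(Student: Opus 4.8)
The plan is to read the statement off the two general results already at hand: Theorem~\ref{teoProjLsh} supplies the $L$-shadowing property for the factor $g_A$, and Proposition~\ref{mixing} converts $L$-shadowing into the two-sided limit shadowing property, provided $g_A$ is topologically mixing. Thus the whole argument reduces to assembling three ingredients: the $L$-shadowing property of the Anosov diffeomorphism $f_A$, the transfer of $L$-shadowing to $g_A$ through the antipodal quotient, and the topological mixing of $g_A$.

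First I would note that $f_A$ is a hyperbolic toral automorphism, hence expansive and with the shadowing property, i.e.\ topologically hyperbolic; by \cite{CC2}*{Proposition 3} it therefore has the $L$-shadowing property. Since the antipodal quotient $q\colon\mathbb{T}^2\to\mathbb{S}^2$ is continuous, onto and open and satisfies $q\circ f_A=g_A\circ q$, Theorem~\ref{teoProjLsh} applies directly and yields the $L$-shadowing property for $g_A$. This step is essentially the content recorded in the discussion preceding Theorem~\ref{teoProjLsh}.

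Next I would establish that $g_A$ is topologically mixing, which is the only ingredient not already spelled out. The point is that topological mixing descends through a factor map. It is classical that the hyperbolic toral automorphism $f_A$ is topologically mixing. Given nonempty open sets $U,V\subset\mathbb{S}^2$, their preimages $q^{-1}(U)$ and $q^{-1}(V)$ are nonempty and open in $\mathbb{T}^2$, so mixing of $f_A$ produces $n_0$ with $f_A^n(q^{-1}(U))\cap q^{-1}(V)\neq\emptyset$ for all $n\geq n_0$. Choosing a point $p=f_A^n(u)$ in this intersection with $u\in q^{-1}(U)$, the relation $q\circ f_A^n=g_A^n\circ q$ gives $q(p)=g_A^n(q(u))\in g_A^n(U)$, while $p\in q^{-1}(V)$ gives $q(p)\in V$; hence $g_A^n(U)\cap V\neq\emptyset$ for all $n\geq n_0$, and $g_A$ is topologically mixing.

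With $g_A$ topologically mixing and satisfying the $L$-shadowing property, Proposition~\ref{mixing} immediately delivers the two-sided limit shadowing property, completing the proof. I do not expect a genuine obstacle: both the $L$-shadowing transfer and the mixing argument are short, and the only care needed is the routine verification that mixing passes to the factor $g_A$ through the intertwining relation. The mildly delicate point is merely to confirm that the antipodal quotient indeed meets the hypotheses of Theorem~\ref{teoProjLsh} (openness and the semiconjugacy $q\circ f_A=g_A\circ q$), which is already granted in the surrounding text.
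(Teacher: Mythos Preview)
Your proposal is correct and follows exactly the paper's approach: deduce the $L$-shadowing property of $g_A$ from Theorem~\ref{teoProjLsh}, observe that $g_A$ is topologically mixing because $f_A$ is, and then apply Proposition~\ref{mixing}. You simply fill in more detail (the source of $L$-shadowing for $f_A$ and the descent of mixing through the factor map) than the paper's three-line proof, but the logical structure is identical.
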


\begin{proof}
Note that $g_A$ is topologically mixing since $f_A$ is topologically mixing, and satisfies the L-shadowing property by Theorem \ref{teoProjLsh}, so it has the two-sided limit shadowing property by Proposition \ref{mixing}.
\end{proof}

\subsection{L-shadowing without periodic points}

The following example was considered in \cite{CK} as an example of a homeomorphism with the two-sided limit shadowing property but without periodic points. We will prove that it satisfies the L-shadowing property and exhibit arbitrarily small topological semi-horseshoes in it.

\begin{proof}[Proof of Theorem \ref{aperiodic}]For each $r>1$ consider the set $\{0,\dots,r-1\}$ endowed with the discrete metric $\rho$, let $\Omega_r=\{0,\dots,r-1\}^{\Z}$ and consider in $\Omega_r$ the Tychonoff product topology. Let $p$ and $q$ be relatively prime integers and $X_{(p,q)}$ be the set of all sequences in $\Omega_{p+q-1}$ whose entries are vertices visited during a bi-infinite walk on the directed graph with two loops, one of length $p$ and one of length $q$ as shown in Figure \ref{figGraph-pq}. 
See \cite{CK} for more details.
\begin{center}
\begin{figure}[ht]\label{fig}
\includegraphics{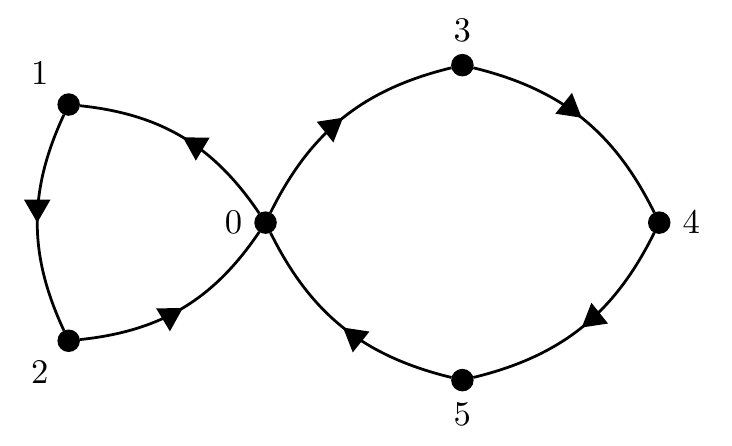}
\caption{A graph presenting the shift space $X_{(3,4)}$.}
\label{figGraph-pq}
\end{figure}
\end{center}
It is clear that $X_{(p,q)}$ is invariant by the shift map on $\Omega_{p+q-1}$ and is a subshift of finite type. Moreover, it does not have any periodic point with period smaller than $\min\{p,q\}$. Let $(p_n)_{n=1}^\infty$ be a strictly increasing sequence of prime numbers. For each $n\in\N$ let $X_n=X_{(p_n,p_{n+1})}$ and $\sigma_n$ be the shift transformation on $\Omega_{p_{n}+p_{n+1}-1}$ restricted to $X_n$. Consider the product system $F=\sigma_1\times\sigma_2\times\ldots$ on $X=\prod_{n=1}^\infty X_n$ and note that $F$ has no periodic points since the coordinates $x_n$ of a periodic point of $F$ would be periodic points of the respective shift map $\sigma_n$, that only admits periodic points with period at least $p_n$, so this would contradict the fact that $p_n\to\infty$ when $n\to\infty$. Also, $F$ is topologically mixing as a consequence of Theorem 5.3 in \cite{CK}.


Now we prove that $F$ has the $L$-shadowing property. First, we clarify some notation. If $a\in X$, then $a=(a_n)_{n\in\N}$ where $a_n\in X_n$ 
for every $n\in\N$, and each $a_n$ is equal to a sequence denoted by $(a_{n,k})_{k\in\Z}$. 
In this way, $a_{n,k}$ denotes the element of position $k$ in the sequence $a_n$, that, in turn, 
is the element of position $n$ of $a$. The metric $d$ of $X$ is defined as follows: if $x,y\in X$ then
$$d(x,y)=\sum_{n=1}^{+\infty}\frac{d_n(x_n,y_n)}{2^n}$$
where $d_n$ is the metric in $X_n$ defined by $$d_n(x_n,y_n)=\sum_{k\in\Z}\frac{\rho(x_{n,k},y_{n,k})}{2^{|k|}}$$ and $\rho$ denotes the discrete metric. 

It is proved in \cite{CK} that $F$ has the two-sided limit shadowing property and, hence, the shadowing property.\footnote{Notice that a positive answer to 
Question \ref{tslsandLs} would finish the proof.} 
For each $\eps>0$, consider $\delta>0$, given by the shadowing property of $F$, such that every $\delta$-pseudo orbit of $F$ is $\frac{\epsilon}{2}$-shadowed. Let $(l^n)_{n\in\N}\subset X$ be a $(\delta,L)$-pseudo orbit of $F$ and consider two points $r$ and $s$ in $X$ such that $(l^n)_{n\in\N}$ is two-sided limit shadowed by $r$ and $\frac{\eps}{2}$-shadowed by $s$. 
To obtain a point $(\eps,L)$-shadowing $(l^n)_{n\in\N}$ we will define a new point $w\in X$ mixing the coordinates of $r$ and $s$, so that the coordinates of $s$ that are not near enough the pseudo orbit are changed to the coordinates of $r$ to be close enough.

We define $w\in X$ as follows: for each $n\in\mathbb{N}$, let $w_n=s_n$ if there is $n_0>0$ such that 
$$s_{n,k}=r_{n,k} \,\,\,\,\,\, \text{whenever} \,\,\,\,\,\, |k|>n_0;$$ otherwise, 
let $w_n=r_n$. We claim that $w$ defined this way $(2\eps,L)$-shadows $(l^n)_{n\in\N}$.
Let $N$ be the first positive integer number satisfying $w_N\neq s_N$ (if such number does not exist, 
then $s$ is easily seen to $(\eps,L)$-shadow $(l^n)_{n\in\N}$). 
Since $(l^n)_{n\in\N}$ is two-sided limit shadowed by $r$ and $\frac{\eps}{2}$-shadowed by $s$, we can choose $J>0$ such that $$d(F^k(r),F^k(s))\leq\eps\,\,\,\,\,\, \text{whenever} \,\,\,\,\,\, |k|>J.$$ Since $w_N\neq s_N$, 
there exists $j\in\Z$ such that $|j|>J$ and $r_{N,j}=w_{N,j}\neq s_{N,j}$. This implies that 
$$d_N(\sigma_N^j(r_N),\sigma_N^j(s_N))>1$$ since $r_{N,j}$ and $s_{N,j}$ are the elements of order 0 in 
$\sigma_N^j(r_N)$ and $\sigma_N^j(s_N)$, respectively. Recall that $$d(F^j(r),F^j(s))=\sum_{n=1}^{+\infty}\frac{d_n(\sigma_n^j(r_n),\sigma_n^j(s_n))}{2^n},$$ so that $d(F^j(r),F^j(s))\leq\eps$ implies that each term of this sum is smaller than $\eps$. In particular, the term of order $N$ satisfies $$\frac{1}{2^N}<\frac{d_N(\sigma_N^j(r_N),\sigma_N^j(s_N))}{2^N}<\eps.$$
Thus, for each $a,b\in X$ it follows that $$\sum_{n=N}^{+\infty}\frac{d_n(a_n,b_n)}{2^n}=\frac{1}{2^N}\sum_{n=1}^{+\infty}\frac{d_{n+N}(a_{n+N},b_{n+N})}{2^n}<\eps$$ and for each $i\in\Z$ we have
\begin{eqnarray*} d(F^i(w),l_i)&=&\sum_{n=1}^{N-1}\frac{d_n(\sigma^i(w_n),l^i_n)}{2^n}+\sum_{n=N}^{+\infty}\frac{d_n(\sigma^i(W_n),l^i_n)}{2^n} \\
&\leq&\sum_{n=1}^{N-1}\frac{d_n(\sigma^i(s_n),l^i_n)}{2^n}+\eps\leq 2\eps.
\end{eqnarray*}
This proves that  $(l^n)_{n\in\N}$ is $2\eps$-shadowed by $w$. 

To see it is also two-sided limit shadowed by $w$, consider for each $\gamma>0$ a number $M>0$ satisfying $$\sum_{n=M+1}^{+\infty}\frac{1}{2^n}<\frac{\gamma}{6}$$ and choose $k\in\N$ such that $w_{n,i}=r_{n,i}$ whenever $|i|>k$ and $n\leq M$. Since $r$ two-sided limit shadows $(l^n)_{n\in\N}$ one can find $P>0$ such that 
$$\sum_{n=1}^M\frac{d_n(\sigma^j(w_n),l^j_n)}{2^n}<\frac{\gamma}{2} \,\,\,\,\,\, \text{whenever} \,\,\,\,\,\, |j|>P.$$ Thus, if $|j|>P$, then
$$d(F^j(w),L^j)=\sum_{n=1}^ M\frac{d_n(\sigma^j(w_n),l^j_n)}{2^n}+\sum_{n=M+1}^{+\infty}\frac{d_n(\sigma^j(w_n),l^j_n)}{2^n}<\gamma.$$
Since this can be done for every $\gamma>0$, it follows that $w$ two-sided limit shadows $(l^n)_{n\in\N}$ and the L-shadowing property is proved.
\end{proof}

\begin{remark}\label{rmkSemiFerrExplicitas}
Let us construct arbitrarily small topological semi-horseshoes in the example given in the previous proof. We continue using the same notation. 
For each $n\in\N$ we will choose $Q_n\subset X_n$ and $k_n\in\N$ such that $\sigma_n^{k_n}(Q_n)=Q_n$ and $\sigma_n^{k_n}\colon Q_n\to Q_n$ is semi-conjugate to a shift of two symbols $\sigma_2$ in $\{a,b\}^{\Z}$. Let $Q_n$ be the set of sequences in $X_n$ where each loop with length $p_n$ appears in the sequence in blocks repeated $p_{n+1}$ times and each loop with length $p_{n+1}$ appears in the sequence in blocks repeated $p_n$ times. Let $k_n=p_np_{n+1}$ and note that $\sigma_n^{k_n}(Q_n)=Q_n$.
Define $h_n\colon Q_n\to \{a,b\}^{\Z}$ as follows: for each sequence $y=(y_k)_{k\in\Z}\in Q_n$ we associate to each block containing $p_{n+1}$ copies of the loop of length $p_n$ the symbol $a$ and to each block containing $p_n$ copies of the loop of length $p_{n+1}$ the symbol $b$, defining the element of order zero in $h_n(y)$ to be the symbol associated to the loop containing the elements $y_0$ and $y_1$ and the other coordinates of $h_n(y)$ so that the previous association preserves the order of the blocks in $y$. Then $h_n$ is clearly surjective, continuous and satisfies $h_n\circ\sigma_n^{k_n}=\sigma_2\circ h_n$.

For each $n\in\N$ and $i\in\{1,\dots,n-1\}$ consider $q_i$ a periodic point of $\sigma_i$ in $X_i$ and define $$K_n=\{(q_1,q_2,\dots,q_{n-1},x,y_{n+1},y_{n+2},\dots);\,\,\, x\in Q_n \,\,\, \text{and} \,\,\, y_j\in X_j \,\,\, \text{for each} \,\,\, j>n\}.$$ For each $\eps>0$ choose $n\in\N$ such that
$$\sum_{i=n}^{+\infty}\frac{1}{2^{i+1}}<\eps$$ and consider $Q_n, k_n$ and $K_n$ as above. Let $\pi(q_i)$ denote the period of $q_i$ and consider
$$N=\pi(q_1)\pi(q_2)\dots\pi(q_{n-1})k_n.$$ Note that each $q_i$ is a fixed point of $\sigma_i^N$, that $\sigma_n^{N_n}(Q_n)=Q_n$ and that $X_j$ is invariant by $\sigma_j$ for each $j>n$. This imply that $F^N(K_n)=K_n$.

Consider $h_n:Q_n\to \{a,b\}^{\Z}$ the semi-conjugacy map between $\sigma_n^{k_n}$ and $\sigma_2$ and let $\pi_n\colon X\to X_n$ denote the projection of $X$ onto $X_n$. If $h\colon K_n\to\{a,b\}^{\Z}$ is defined by $h=h_n\circ\pi_n$ and $M=N/k_n$, then $h\circ F^N=\sigma_2^{M}\circ h$ and $h$ is a semi-conjugacy map between $F^N$ restricted to $K_n$ and $\sigma_2^M$. Also note that $$\sup_{k\in\Z}\diam(F^k(K_n))\leq\eps$$ since points in $F^k(K_n)$ have the same $n-1$ first coordinates and the other coordinates are bounded by $\frac{1}{2^{i+1}}$. Since this can be done for every $\eps>0$, we obtained arbitrarily small topological semi-horseshoes for $F$.

\end{remark}







\section{Positive expansivity}

In this section, we obtain Theorem \ref{h} as a direct consequence of the next result, since the shadowing property easily implies the first condition, while positive finite-expansivity implies the second.

\begin{prop}
 Let $f$ be a homeomorphism of a compact metric space satisfying:
 \begin{enumerate}
  \item for any $\epsilon>0$ there is $\delta>0$ such that $\dist(a,b)<\delta$ implies
  $W^s_\epsilon(a)\cap W^u_\epsilon(b)\neq\emptyset$,
  \item for each $x\in X$ there is $c>0$ such that $W^s_c(x)=\{x\}$.
 \end{enumerate}
This implies that $X$ is a finite set.
\end{prop}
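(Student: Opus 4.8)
The plan is to show that hypotheses $(1)$ and $(2)$ together force $f$ to be positively expansive for a single constant, and then to invoke the classical theorem that a positively expansive homeomorphism of a compact metric space has finite domain \cite{KR}.

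First I would merge the two hypotheses into one local statement. Fix $x\in X$ and let $c=c_x>0$ be as in $(2)$, so $W^s_c(x)=\{x\}$. Putting $\epsilon=c$ in $(1)$ gives $\delta(c)>0$ (depending only on $c$, not on $x$) with the property that $\dist(x,y)<\delta(c)$ forces $W^s_c(x)\cap W^u_c(y)\neq\emptyset$; as $W^s_c(x)=\{x\}$ the intersection is $\{x\}$, so $x\in W^u_c(y)$, i.e. $\dist(f^{-k}x,f^{-k}y)\le c$ for all $k\ge0$, that is $y\in W^u_c(x)$. The same works for any $\epsilon\le c_x$, so: for each $x$ and each $\epsilon\in(0,c_x]$ one has $B(x,\delta(\epsilon))\subseteq W^u_\epsilon(x)$, with $\delta(\epsilon)$ uniform in $x$. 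Covering $X$ by the balls $B(x,\delta(c_x))$ and extracting a finite subcover with a Lebesgue number then yields $\eta,C>0$ such that $\dist(x,y)<\eta\Rightarrow\dist(f^{-k}x,f^{-k}y)\le C$ for all $k\ge0$; so $f^{-1}$ is uniformly stable.

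The crux is to upgrade this from ``$f^{-1}$ uniformly stable'' to ``$f^{-1}$ uniformly (eventually) contracting'' — equivalently, to show $f$ is positively expansive for a single constant. This is where the full strength of $(2)$ is needed, and it is the delicate point, precisely because the constant $c$ in $(2)$ is not assumed uniform (and the sets $G_\epsilon=\{x:W^s_\epsilon(x)=\{x\}\}$, though satisfying $f^{-1}(G_\epsilon)\subseteq G_\epsilon$, need not be closed, so one cannot merely pass to limits). I would argue by contradiction: if no uniform expansive constant existed, then for some $\epsilon>0$ and every $m$ there are $x_m,y_m$ with $\dist(x_m,y_m)\to0$ and a least $n_m$ — necessarily $n_m\to\infty$ — with $\dist(f^{-n_m}x_m,f^{-n_m}y_m)>\epsilon$; passing to subsequential limits $f^{-n_m}x_m\to u$, $f^{-n_m}y_m\to v$ gives $v\ne u$ with $\dist(f^k u,f^k v)\le\omega_{f^{-1}}(\epsilon)$ for all $k\ge0$, i.e. $v\in W^s_{\omega_{f^{-1}}(\epsilon)}(u)$; the remaining and hardest task is to prove that $u$ inherits a uniform expansive constant (for which one would exploit the $f^{-1}$-invariance of the $G_\epsilon$ together with the fact that $(1)$ controls backward orbits of nearby points at \emph{every} scale $\le c_x$), after which $(2)$ at $u$ is contradicted.

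Once $f$ is positively expansive for a single constant, finiteness of $X$ is the classical theorem of \cite{KR}. For a self-contained ending one would first pass to an adapted metric in which $f$ is uniformly expanding, hence $f^{-1}$ is a uniform contraction of some ratio $\lambda<1$ on some scale $\sigma$, and then run the covering argument: pick $x_1,\dots,x_m$ with $X=\bigcup_i B(x_i,\sigma/2)$; since $f$ is onto, $X=f^{-n}(X)=\bigcup_i f^{-n}(B(x_i,\sigma/2))$, and each piece has diameter $\le\lambda^n\sigma\to0$, forcing $|X|\le m$. I expect all the real difficulty to lie in the third paragraph.
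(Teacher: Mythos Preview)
Your second paragraph is correct and in fact already contains more than you extract from it. The inclusion $B(x,\delta(\epsilon))\subset W^u_\epsilon(x)$ for every $\epsilon\le c_x$, with $\delta(\epsilon)$ independent of $x$, says precisely that every point of $X$ is Lyapunov stable for $f^{-1}$; on a compact space this is the same as $f^{-1}$ being equicontinuous, not merely ``uniformly stable''. The paper uses exactly this observation and then bypasses your third paragraph altogether by invoking \cite{AG}*{Theorem 3.4}: if $f^{-1}$ is equicontinuous then so is $f$. Once $f$ is equicontinuous, hypothesis (2) finishes immediately --- for each $x$, equicontinuity with $\epsilon=c_x$ gives $\delta$ with $B(x,\delta)\subset W^s_{c_x}(x)=\{x\}$, so every point is isolated and $X$ is finite. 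No passage through a uniform positive-expansivity constant or an adapted metric is needed.

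Your third paragraph, by contrast, has a real gap that you yourself flag. First, the contradiction you set up (close points whose \emph{backward} orbits first exceed $\epsilon$ at time $n_m$) is the negation of equicontinuity of $f^{-1}$, not of uniform positive expansivity of $f$; the latter is a statement about \emph{forward} orbits staying close, so the two are not directly ``equivalent'' as you assert. Second, and more seriously, your limit pair $(u,v)$ only satisfies $v\in W^s_{\omega_{f^{-1}}(\epsilon)}(u)\setminus\{u\}$, while hypothesis (2) at $u$ gives $W^s_{c_u}(u)=\{u\}$ for some possibly tiny $c_u$; nothing in your argument forces $\omega_{f^{-1}}(\epsilon)\le c_u$. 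The ``hardest task'' you identify --- making $u$ inherit a controlled constant --- is precisely this, and the hint about $f^{-1}$-invariance of $G_\epsilon$ does not obviously help, since $u$ arises as a limit and the $G_\epsilon$ are not closed. The paper's route through Akin--Glasner sidesteps the issue entirely.
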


\begin{proof}
Given $x\in X$ consider $c>0$, given by (2), such that $W^s_c(x)=\{x\}$.
Choose $\delta>0$ such that
$$\dist(a,b)<\delta\,\,\,\,\,\, \text{implies} \,\,\,\,\,\, W^s_c(a)\cap W^u_c(b)\neq\emptyset.$$
If $y\in B(x,\delta)$,
then (1) assures that $W^s_c(x)\cap W^u_c(y)\neq\emptyset$. Since $W^s_c(x)=\{x\}$, we conclude that $x\in W^u_c(y)$.
This implies that $y\in W^u_c(x)$ and, hence, $$B(x,\delta)\subset W^u_c(x).$$
This implies that $f^{-1}$ is equicontinuous (equivalently, each point is Lyapunov stable for $f^{-1}$).
By \cite{AG}*{Theorem 3.4} we have that $f$ is equicontinuous.
This implies that each point of $X$ is isolated, and as $X$ is compact we conclude that $X$ is a finite set.
\end{proof}

\section*{Acknowledgements}
The second author was supported by Capes, CNPq and the Alexander von Humboldt Foundation. Part of this work was developed while the second author was visiting the Departamento de Matem\'atica y Estad\'\i stica del Litoral in Salto, Uruguay, where some conversations with Mauricio Achigar happened.

\vspace{1.5cm}
\noindent

{\em A. Artigue and J. Vieitez}
\vspace{0.2cm}

\noindent

Departamento de Matem\'atica y Estadística del Litoral,

Universidad de la República,

Gral. Rivera 1350, Salto, Uruguay
\vspace{0.2cm}

\email{artigue@unorte.edu.uy}

\email{jvieitez@unorte.edu.uy}

\vspace{1.5cm}
\noindent

{\em B. Carvalho}
\vspace{0.2cm}

\noindent

Departamento de Matem\'atica,

Universidade Federal de Minas Gerais - UFMG

Av. Ant\^onio Carlos, 6627 - Campus Pampulha

Belo Horizonte - MG, Brazil.

\vspace{0.2cm}
Friedrich-Schiller-Universität Jena

Fakultät für Mathematik und Informatik

Ernst-Abbe-Platz 2

07743 Jena

\vspace{0.2cm}

\email{bmcarvalho@mat.ufmg.br}

\vspace{1.5cm}
\noindent

{\em W. Cordeiro}

\noindent

Institute of Mathematics, Polish Academy of Sciences

ul. \'Sniadeckich, 8

00-656 Warszawa - Poland

\vspace{0.2cm}

\email{wcordeiro@impan.pl}

\end{document}